\numberwithin{equation}{section}
\newcommand{\pointl}{\mathcal{L}}
\newcommand{\pointq}{\mathcal{Q}}
\newcommand{\scrm}{\mathscr{M}}
\newcommand{\inv}{{-1}}
\newcommand{\bbs}{\mathbb{S}}
\newcommand{\cals}{\mathcal{S}}
\newcommand{\bbn}{\mathbb{N}}
\newcommand{\real}{\mathbb{R}}
\newcommand{\uv}{\mathbf{v}}
\newcommand{\uu}{\mathbf{e}}
\newcommand{\Xu}{X_{\uu}}
\newcommand{\Sv}{S_{\uv}}
\newcommand{\Iv}{I_\uv}
\newcommand{\nullm}{\emptyset}
\newcommand{\rnz}{\real_0^{\mathbb{N}}}
\newcommand{\bbt}{\mathbb{T}}
\newcommand{\h}{\hspace{.25cm}}
\newcommand{\scrc}{\mathscr{C}}
\newcommand{\bbm}{\mathbb{M}}
\newcommand{\stas}{\mbox{St$\alpha$S}}
\newcommand{\beq}{\begin{equation}}
\newcommand{\eeq}{\end{equation}}
\newcommand{\alns}[1]{\begin{align*}#1\end{align*}}
\newcommand{\aln}[1]{\begin{align} #1 \end{align}}
\newcommand{\been}{\begin{enumerate}}
\newcommand{\een}{\end{enumerate}}
\newcommand{\norm}[1]{\| #1 \|}
\newcommand{\dnkbo}{N_n'^{ (K,B,1)}}
\newcommand{\tub}{\tilde{U}^{(B)}}
\newcommand{\ub}{U^{(B)}}
\newcommand{\eps}{\epsilon}
\newcommand{\eqd}{\,{\buildrel d \over =}\,}
\newcommand{\ckr}{C_c^+ (\bar{\real}_0)}
\newcommand{\zrer}{\bar{\real}_0}
\newcommand{\point}{\mathcal{P}}
\newcommand{\malpha}{m_\alpha}
\newcommand{\bbo}{\mathbbm{1}}
\newcommand{\tnk}{\tilde{N}_n^{(K)}}
\newcommand{\scrmrz}{\scrm_0}
\newcommand{\aub}{A_\uu^{(B)}}
\newcommand{\nkb}{\tilde{N}_n^{(K,B)}}
\newcommand{\calf}{\mathcal{F}}
\newcommand{\dnkb}{N_n'^{ (K,B)}}
\newcommand{\poi}{\mathscr{P}}
\newcommand{\nskb}{N_*^{(K,B)}}
\newcommand{\bld}{\mathbf}
\DeclareMathOperator{\fralpha}{\Phi_\alpha}
\DeclareMathOperator{\dtv}{d}
\DeclareMathOperator{\pow}{Pow}
\DeclareMathOperator{\eff}{EFF}
\DeclareMathOperator{\regvar}{RV}
\DeclareMathOperator{\lamiid}{\lambda_{iid}}
\newtheorem{thm}{Theorem}[section]
\newtheorem{propn}[thm]{Proposition}
\newtheorem{lemma}[thm]{Lemma}
\newtheorem{cor}[thm]{Corollary}
\theoremstyle{remark}
\newtheorem{remark}[thm]{Remark}
\newtheorem{example}{Example}[section]
\theoremstyle{definition}
\newtheorem{defn}[thm]{Definition}
\newtheorem{ass}[thm]{Assumptions}
\DeclareMathOperator{\prob}{\mathbf{P}}
\DeclareMathOperator{\exptn}{\mathbf{E}}
\DeclareMathOperator{\pstar}{\prob^*}
\DeclareMathOperator{\estar}{\exptn^*}
\DeclareMathOperator{\mbfs}{\mathbf{S}}
\newcommand{\hlconv}{\stackrel{\mbox{\tiny{HL}}}{\longrightarrow}}
\begin{document}
\title[Branching random walk] {\vspace{-2cm}\textbf{Branching Random Walks, Stable Point Processes and Regular Variation} \vspace{0.5cm}}

\author[A. Bhattacharya]{Ayan Bhattacharya} \thanks{Ayan Bhattacharya's research was partially supported by the project RARE-318984 (a Marie Curie FP7 IRSES Fellowship).}

\email{ayanbhattacharya.isi@gmail.com, rajatmaths@gmail.com, parthanil.roy@gmail.com}

\author[R. S. Hazra]{Rajat Subhra Hazra}\thanks{Rajat Subhra Hazra's research was supported by Cumulative Professional Development Allowance from Ministry of Human Resource Development, Government of India and Department of Science and Technology, Inspire funds.}

\author[P. Roy]{Parthanil Roy}\thanks{Parthanil Roy's research was supported by Cumulative Professional Development Allowance from Ministry of Human Resource Development, Government of India and the project RARE-318984 (a Marie Curie FP7 IRSES Fellowship).}

\keywords{Branching random walk, Branching process, Strictly stable, Point Process, Cox process, Extreme values, Rightmost point}

\subjclass[2010]{Primary 60J70, 60G55; Secondary 60J80}

\maketitle
\centerline{Statistics and Mathematics Unit, Indian Statistical Institute, Kolkata}

\begin{abstract}
Using the language of regular variation, we give a sufficient condition for a point process to be in the superposition domain of attraction of a strictly stable point process. This sufficient condition is then used to obtain  the weak limit of a sequence of point processes induced by a branching random walk with jointly regularly varying displacements. Because of heavy tails of the step size distribution, we can invoke a one large jump principle at the level of point processes to give an explicit representation of the limiting point process. As a consequence, we extend the main result of \citet{durrett:1983} and verify that two related predictions of \citet{brunet:derrida:2011} remain valid for this model.
\end{abstract}



\section{Introduction}\label{sec:introduction}
Branching random walk on the real line can be described as follows. In the zeroth generation, one particle is born at the origin. It branches into a number of offspring particles and positions them according to a point process $\pointl$ on the real line giving rise to the first generation. Each of the particles in the first generation produces offspring and they (the offsprings) undergo displacements (with respect to the positions of their parents) according to independent copies of the same point process $\pointl$. The position of a particle in the second generation is its displacement translated by its parent's position.  This forms the second generation, and so on. Assume further that the random number of new particles produced by a particle and the displacements corresponding to the new particles are independent. The resulting system is known as a branching random walk.

Let $Z_n$ denote the number of particles in the $n^{th}$ generation. Clearly $\{Z_n\}_{n \geq 0}$ forms a Galton-Watson branching process with $Z_0 \equiv 1$. We assume that this branching process is supercritical and condition on its survival. Further in this article, the displacements of offspring particles coming from the same parent will be dependent and multivariate regularly varying. We shall investigate this model from the point of view of extreme value theory (see Theorem \ref{thm:mainthm2}) and extend the work of \citet{bhattacharya:hazra:roy:2014}. In particular, this answers a question of Antar Bandyopadhyay and Jean Bertoin (asked independently during personal communications with the first author).

The earliest works on branching random walks include \citet{hammersley1974}, \citet{kingman1975}, \citet{biggins1976}, etc. This model and its extreme value theory have now become very important because of their connections to various probabilistic models (e.g., Gaussian free fields, conformal loop ensembles, multiplicative cascades,  tree polymers etc.); see \citet{BZ2012}, \citet{hu:shi:2009, addario2009}, \citet{aidekon2013},
\citet{biskup:louidor:2013, biskup:louidor:2014}, \citet{bramson:ding:zeitouni:2013}, \citet{dey:waymire:2015}. For existing results on branching random walks with heavy-tailed displacements and their continuous parameter analogues, see \citet{durrett:1979, durrett:1983}, \citet{kyprianou:1999}, \citet{gantert2000}; see also \citet{lalley:shao:2013}, \citet{berard2014} and \citet{maillard:2015} for the latest developments in this direction.


The purpose of this article is two-fold - we show that, for jointly regularly varying displacements, the extremal point process converges to a \emph{randomly scaled scale-decorated Poisson point process} and also find an explicit representation of the limiting point process. To this end, we study the stability property (as introduced by \citet{davydov:molchanov:zuyev:2008}) of the limiting point process and relate it to the regular variation of point processes (in the sense of \citet{hult:lindskog:2006}) based on heavy-tailed analogues of the main results of \citet{subag:zeitouni:2014}. Our mode of proof gives a mathematical justification behind obtaining a scale-decorated  Poisson point process in the limit. We also extend the result of \citet{durrett:1983} and show that, as in light-tailed case, the asymptotic position of the rightmost point is not qualitatively affected by the presence of dependence. 

This article is organised as follows. In Section~\ref{sec:prelim_main}, we present the background, develop the notations and state the main results in this paper. These results are proved in Sections~\ref{sec:stablepp} and \ref{sec:brw}, and their consequences are given in Section~\ref{sec:conseq}. Finally, we list all the important notions and notations used in this paper in the appendix.

%
%
%
%

\section{Preliminaries and Main Results} \label{sec:prelim_main}

In this section, we present the main results of this paper. To this end, we need to introduce some notations and develop some machineries. This is done by brief discussions of the key phrases used in the title of this paper in the reverse order. The connection between these notions will be clear when the main theorems are stated. All the random quantities defined in this paper are defined on a common probability space $(\Omega, \mathcal{F}, \prob)$ unless specified otherwise.

\subsection{Regular Variation} \label{subsec:reg_var}
The definition of regular variation on $\mathbb{R}^d$ is typically given based on vague convergence on the compactified and punctured space $[-\infty, \infty]^d \setminus \{0\}$; see, e.g., \citet{resnick:1987}. However, this method is not at all robust to spaces that are not locally compact (e.g., $\mathbb{R}^\mathbb{N}$, function spaces, spaces of measures, etc.), because compactification of such spaces leads to a number of topologically undesirable consequences; see \citet{hult:lindskog:2006} and \citet{lindskog:resnick:roy:2014} for a detailed discussion. In order to circumvent this obstacle, \citet{hult:lindskog:2006} introduced a general definition of regular variation with very mild conditions on the underlying space, as described below.

Let $(\bbs,d)$ be a Polish space and $\bullet: (0,\infty)\times \bbs \to \bbs$ be a continuous scalar multiplication satisfying $1 \bullet s = s$ for all $s \in \bbs$, and $b_1 \bullet (b_2 \bullet s) = (b_1b_2) \bullet s$ for every $b_1, b_2 >0$. Fix an element $s_0 \in \bbs$ such that $b \bullet s_0 = s_0$ for all $b >0$, and endow the space $\bbs_0 = \bbs \setminus \{s_0\}$ \label{not:s_0} with the relative topology. Let $\bbm(\bbs_0)$ denote the class of all Borel measures on $\bbs_0$ whose restrictions to $\bbs \setminus B(s_0,r)$ is finite for every $r>0$, and  let $\scrc_0$ denote the class of all bounded continuous functions $f:\bbs_0 \to [0,\infty)$ that vanish on $B(s_0,r) \setminus \{s_0\}$ for some $r>0$. We say that a sequence of measures $\{\nu_n\} \subseteq \bbm(\bbs_0)$ converges in the Hult-Lindskog (HL) sense to a measure $\nu \in \bbm(\bbs_0)$ (denoted by $\nu_n \hlconv \nu$) if
\aln{
\lim_{n \to \infty} \int_{\bbs_0} f (x)\dtv \nu_n(x) = \int_{\bbs_0} f (x)\dtv \nu (x)\mbox{ for every } f \in \scrc_0. \label{eq:hlconv}
}

\begin{defn}[{\bf Regular variation}; \citet{hult:lindskog:2006}] \label{defn:hlregvar}
A measure $\nu \in \bbm(\bbs_0)$ is regularly varying if there exists an $\alpha >0$, an increasing sequence of positive real numbers $\{b_n\}$ satisfying $b_{[\beta n]} / b_n \to \beta^{1/\alpha}$ for all $\beta > 0$, and a non-null measure $\lambda \in \bbm(\bbs_0)$ such that $n \nu( b_n \bullet\cdot) \hlconv \lambda(\cdot)$ as $n \to \infty$. This will be denoted by $ \nu \in \regvar(\bbs_0, \alpha, \lambda)$. \label{not:regvar_ams}
\end{defn}

From Theorem 3.1 in \citet{lindskog:resnick:roy:2014}, it is clear that the limit measure $\lambda \in \bbm(\bbs_0)$ satisfies the following scaling property:
\aln{
\lambda( b \bullet \cdot) = b^{-\alpha} \lambda(\cdot) \label{eq:homregvar}
}
for all $b>0$. This definition of regular variation boils down to the usual definition of regular variation on $\real$ or $\real^d$ as pointed out in Subsection 2.3 in \citet{lindskog:resnick:roy:2014}. In this article, we shall be interested in  regular variations on the spaces $\real^\bbn=\{\mathbf{x}=(x_1, x_2, \ldots): x_1, x_2, \ldots \in \mathbb{R}\}$ and $\scrm(\bar{\real}_0) = \{ \point : \point \mbox{ is a Radon point measure on } [-\infty,\infty]\setminus\{0\}\}$ \label{not:mr_0} as illustrated in the following examples.

\begin{example}\label{example:regvarprocess}
 Let us consider $\bbs = \real^\bbn$ and $s_0 = \bld{0}_\infty$, the zero element in $\real^\bbn$. For simplicity, consider the i.i.d.\ process $\bld{X} = \{X_n : n \in \bbn\}$ such that $\prob_{X_1}(\cdot) = \prob(X_1 \in \cdot) \in \regvar(\zrer=[-\infty,\infty]\setminus \{0\}, \alpha, \nu_\alpha)$, where
\aln{
\nu_\alpha(\dtv x) = \alpha p x^{-\alpha -1} \bbo_{(x > 0)} \dtv x + \alpha q (-x)^{-\alpha -1} \bbo_{(x <0)} \dtv x \label{eq:nualpha}
}
with $p, q \geq 0$, $p+q =1$. In particular, this means that the usual tail-balancing conditions hold, i.e.,
\aln{
\lim_{x \to \infty} \frac{\prob(X_1>x)}{\prob(|X_1|>x)} = p ~~~~ \mbox{ and }~~~~~~ \lim_{x \to \infty} \frac{\prob(X_1<-x)}{\prob(|X_1|>x)} = q\,. \label{eq:tailbalancing}
}
If $q =0$, then the measure $\nu_\alpha$ is supported on $(0,\infty)$ and is denoted by $\malpha$. It has been established in \citet{lindskog:resnick:roy:2014} that
\aln{
\prob_{\bld X} (\cdot) = \prob(\bld{X} \in \cdot) \in \regvar(\rnz = \real^\mathbb{N}\setminus\{\bld{0}_\infty\}, \alpha, \lamiid), \label{eq:iidprocesslim}
}
where $\lamiid$ is a measure on $\rnz$ (concentrating on the axes) such that
\aln{
\lamiid(\dtv \mathbf{x}) := \sum_{i=1}^\infty \prod_{j=1}^{i-1} \delta_0 (\dtv x_j) \times \nu_\alpha (\dtv x_i) \times \prod_{j=i+1}^\infty \delta_0(\dtv x_j). \label{eq:iidprocess_lambda}
}
Here $\delta_0$ denotes the Dirac measure putting unit mass at $0$. Examples where the limit measures are not concentrated on the axes were considered in \citet{resnick:roy:2014}. They investigated the corresponding regular variation property for stationary moving average processes with positive regularly varying innovations and positive coefficients and computed the limit measure explicitly.

\end{example}

\begin{example} \label{example:regvarpointmeasure}
Consider the Polish space $\bbs = \scrm (\bar{\real}_0)$ of all Radon point measures on $\bar{\real}_0:=[-\infty, \infty]\setminus\{0\}$ \label{not:r_0} endowed with the vague topology, and $s_0 = \nullm$ (the null measure). \label{not:nullm} The scalar multiplication by $b > 0$ is denoted by $\mbfs_b$ and is defined as follows: if $ \point =\sum_i \delta_{u_i} \in \scrm(\bar{\real}_0)$, then
\begin{equation}\label{eq: scalar}
\mbfs_b \point = b \bullet \point =\sum_{i} \delta_{b u_i}.
\end{equation}
In other words, a scalar multiple of a point measure is obtained by multiplying each point of the measure by a positive real number. The HL convergence in $\scrm_0 = \scrm(\bar{\real}_0) \setminus \{\nullm\}$ \label{not:scrm_0} has been discussed and used by \citet{hult:samorodnitsky:2010} (see also \citet{fasen:roy:2014}) in the context of large deviations. This convergence (more specifically,  Definition \ref{defn:hlregvar}) gives rise to the notion of regular variation for point processes, which will play crucial role in this paper.
\end{example}

\subsection{Strictly Stable Point Processes}

A point process on $\zrer$ is an $\scrm (\bar{\real}_0)$-valued random variable defined on $(\Omega, \mathcal{F}, \prob)$ that does not charge any mass to $\pm \infty$. The following definition of strict stability for such point processes was introduced in \citet{davydov:molchanov:zuyev:2008} and will be shown to be intimately connected to the regular variation on the space $\scrm_0$.

\begin{defn}[{\bf $\stas$ point process}; \citet{davydov:molchanov:zuyev:2008}] A point process $N$ (on $\zrer$) is called a strictly $\alpha$-stable ($\stas$) \label{not:stas} point process ($\alpha >0$) if for every $b_1, b_2 >0$,
\aln{
\mbfs_{b_1} N_1 + \mbfs_{b_2} N_2 \eqd \mbfs_{(b_1^\alpha + b_2^\alpha)^{1/\alpha}} N, \label{eq:alphastablepp}
}
where $N_1, N_2$ are independent copies of $N$, $+$ denotes superposition of point processes and $\eqd$ denotes equality in distribution.
\end{defn}

 The (sum) domain of attraction of $\alpha$-stable random variables and vectors is closely related to the notion of regular variation on $\real$ and $\real^d$, respectively; see, e.g., \citet{feller:1971} \citet{meerschaert:scheffler:2001}. The corresponding question has been investigated for normed cone-valued strictly $\alpha$-stable random variables in Subsection~4.4 of \citet{davydov:molchanov:zuyev:2008}. However, for $\stas$ point processes, this question has remained open. In this work, we fill  this gap partially and obtain a sufficient condition for a point process to belong to the (superposition) domain of attraction of an $\stas$ point process. This sufficient condition is given in terms of regular variation of the original point process, as described below.

 A point process $\pointl$ is called regularly varying if $\prob_{\pointl}(\cdot) = \prob(\pointl \in \cdot) \in \regvar(\scrm_0, \alpha , m^*)$ for some $\alpha >0$ and for some $m^* \in \bbm(\scrm_0)$. By a standard abuse of notation, we shall denote this by $\pointl \in \regvar(\scrm_0, \alpha , m^*)$. The following equivalence is our first main result, which is somewhat expected albeit nontrivial.

\begin{thm}\label{thm:mainthm1}
Let $\pointl$ be a point process on $\zrer$, and $\pointl_i$'s be independent copies of $\pointl$. If there exists a non-null measure $m^*$ on $\scrm_0$ such that $\pointl \in \regvar(\scrm_0, \alpha , m^*)$, then $\pointl$ is in the domain of attraction of an $\stas$ point process $\pointq$, i.e.
$\displaystyle{ \mbfs_{b_n^\inv}\sum_{i=1}^n \pointl_i \Rightarrow \pointq\,. }$
Furthermore, in the above situation, the Laplace functional of the limiting point process $\pointq$ is given by
\[
\exptn \Big( \exp \Big\{- \int f \dtv \pointq \Big\} \Big) = \exp \bigg\{ - \int_{\scrm_0} \Big(1- \exp\{-\int f \dtv \nu \} \Big)\, m^*(\dtv \nu) \bigg\}
\]
for all nonnegative real-valued measurable functions $f$ defined on $\bar{\real}_0$.
\end{thm}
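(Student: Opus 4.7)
The plan is to work entirely at the level of Laplace functionals, translating the hypothesis of regular variation on $\scrm_0$ into a limiting Laplace transform for the superposition. For $f \in C_c^+(\zrer)$, independence of the $\pointl_i$'s gives
\[
\exptn\!\left[\exp\!\left\{-\int f \dtv \mbfs_{b_n^\inv}\sum_{i=1}^n \pointl_i\right\}\right] = \Big(1 - \exptn[g_n(\pointl)]\Big)^{\!n},
\]
where $g_n(\mu) := 1 - \exp\!\big(-\int f \dtv \mbfs_{b_n^\inv}\mu\big)$. Pushing $\prob_\pointl$ forward by $\mbfs_{b_n^\inv}$ (a change of variables inside the expectation) turns this into
\[
n\,\exptn[g_n(\pointl)] = \int_{\scrm_0}\!\Big(1 - e^{-\int f \dtv \mu}\Big) \, \nu_n(\dtv\mu), \qquad \nu_n := n\,\prob_\pointl(b_n \bullet \cdot).
\]

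The hypothesis $\pointl \in \regvar(\scrm_0, \alpha, m^*)$ is precisely $\nu_n \hlconv m^*$. I would then verify that $g(\mu) := 1 - \exp(-\int f \dtv \mu) \in \scrc_0$: it is bounded by $1$; continuous in the vague topology on $\scrm(\zrer)$ because $f \in C_c^+(\zrer)$; and it vanishes on the set $\{\mu : \mu(\supp f) = 0\}$, which, by upper semicontinuity of $\mu \mapsto \mu(K)$ on compact $K$, is open and therefore a vague neighborhood of $\nullm$ containing some $B(\nullm, r)$. The definition \eqref{eq:hlconv} of HL convergence then gives
\[
n\,\exptn[g_n(\pointl)] \;\longrightarrow\; \int_{\scrm_0}\!\Big(1 - e^{-\int f \dtv \nu}\Big) \, m^*(\dtv\nu),
\]
a finite quantity since $m^* \in \bbm(\scrm_0)$ is finite off any neighborhood of $\nullm$. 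A Taylor expansion of $\log(1-x)$ then yields the limiting Laplace functional displayed in the theorem, and Kallenberg's theorem converts this pointwise convergence on $C_c^+(\zrer)$ into the asserted weak convergence $\mbfs_{b_n^\inv}\sum_{i=1}^n \pointl_i \Rightarrow \pointq$, with $\pointq$ realized concretely as the Poisson cluster sum $\sum_i \nu_i^*$, where $\{\nu_i^*\}$ is a Poisson point process on $\scrm_0$ with intensity $m^*$.

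To verify that $\pointq$ is $\stas$, I would use the scaling relation \eqref{eq:homregvar} for $m^*$, which (by a change of variables $\nu \leftrightarrow \mbfs_b \nu$) translates into
\[
\int_{\scrm_0} h(\mbfs_b \nu)\, m^*(\dtv\nu) = b^\alpha \int_{\scrm_0} h(\nu)\, m^*(\dtv\nu), \qquad b > 0,
\]
for any suitable test function $h$. Applied with $h(\nu) = 1 - e^{-\int f \dtv \nu}$ together with the identity $\int f \dtv \mbfs_b \nu = \int f(b\,\cdot)\dtv \nu$, independence of two copies yields the Laplace functional of $\mbfs_{b_1}\pointq_1 + \mbfs_{b_2}\pointq_2$ as
\[
\exp\!\left\{-(b_1^\alpha + b_2^\alpha)\!\int_{\scrm_0}\!\Big(1 - e^{-\int f \dtv \nu}\Big) m^*(\dtv\nu)\right\},
\]
which coincides with the Laplace functional of $\mbfs_{(b_1^\alpha + b_2^\alpha)^{1/\alpha}}\pointq$; this gives \eqref{eq:alphastablepp}.

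The main obstacle is the topological verification that $g \in \scrc_0$: namely, that $\{\mu : \mu(\supp f) = 0\}$ genuinely contains a metric ball $B(\nullm, r)$ under whichever metric is fixed on $\scrm_0$ to make $\hlconv$ meaningful. Here I would lean on the framework of \citet{hult:samorodnitsky:2010} and \citet{fasen:roy:2014}, which supplies an explicit compatible metric on $\scrm_0$ under which such vanishing neighborhoods are built in; a complementary extension argument (approximating arbitrary nonnegative measurable $f$ on $\zrer$ by elements of $C_c^+(\zrer)$ through monotone convergence) then yields the Laplace-functional formula for all $f$ asserted in the theorem. Everything else is standard bookkeeping with the vague topology.
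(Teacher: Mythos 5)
Your proposal is correct and follows the same skeleton as the paper's proof: factor the Laplace functional of the superposition using independence (your first display is exactly \eqref{eq:doa6}), recognize the hypothesis as $n\prob(\mbfs_{b_n^\inv}\pointl\in\cdot)\hlconv m^*$, pass to the limit against the test functional $F(\nu)=1-e^{-\nu(f)}$, and then use the homogeneity \eqref{eq:homregvar} of $m^*$ to identify the limit as $\stas$. The one place where you genuinely diverge is the step the paper itself flags as the crux: whether $F$ is an admissible test function for HL convergence, i.e.\ whether it vanishes on a metric ball around $\nullm$. The paper sidesteps this by smoothing --- it replaces $F$ by $F_\epsilon(\nu)=1-\exp\{-(\nu(f)-\epsilon)_+\}$, which manifestly vanishes on $B(\nullm,\epsilon)$, and then controls $\int|F_\epsilon-F|\,n\prob(\mbfs_{b_n^\inv}\pointl\in \dtv\nu)$ via $|e^{-x}-e^{-y}|\le|x-y|$ together with the portmanteau bound on the closed set $\{\nu:\nu[\delta,\infty)\ge 1\}$, taking $\epsilon\downarrow 0$ after $n\to\infty$. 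You instead argue directly that $F\in\scrc_0$, because its zero set contains $\{\nu:\nu(\supp f)=0\}$, which is vaguely open by upper semicontinuity of $\nu\mapsto\nu(K)$ on compacts; this is valid here precisely because elements of $\scrm(\zrer)$ are integer-valued, so $\{\nu:\nu(K)<1\}=\{\nu:\nu(K)=0\}$, and an open neighbourhood of $\nullm$ in a metric space automatically contains a ball --- so your appeal to an ``explicit compatible metric'' from the literature is not actually needed. Your route is shorter and avoids the double limit; the paper's route is more robust (it does not rely on integer-valuedness and is reused almost verbatim in the proof of Lemma~\ref{lemma:treerv}). For the stability claim you verify \eqref{eq:alphastablepp} by a direct computation with scaled Laplace functionals, whereas the paper records $\Psi_{\pointq}(f\|y)=\fralpha(yc_f)$ and invokes Corollary~\ref{propn:strict_stable}; these are the same calculation packaged differently. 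Your closing remarks --- extending the Laplace-functional identity from $\ckr$ to all nonnegative measurable $f$ by monotone convergence, and checking local finiteness of the Poisson cluster limit --- fill in details the paper leaves implicit.
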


\noindent Heavy-tailed analogues of a few results in \citet{subag:zeitouni:2014} form the building block of the proof of the above theorem, which in turn becomes significant in establishing the results on limiting point process induced by branching random walks with regularly varying displacements.

It was established in \citet{davydov:molchanov:zuyev:2008} that a point process (on $\real$) is strictly $\alpha$-stable if and only if it admits a series representation of a special kind. Motivated by the works of \citet{brunet:derrida:2011}, \citet{maillard:2013} and \citet{subag:zeitouni:2014}, this has been termed  a \emph{scale-decorated Poisson point process} (ScDPPP) representation in \citet{bhattacharya:hazra:roy:2014}. The precise form of this representation is given in the following definition.

\begin{defn}[{\bf Scale-decorated Poisson point process}]\label{defn:ScDPPP}
A point process $N$ is called a scale-decorated Poisson point process with intensity measure $m$ and scale-decoration $\point$ (denoted by $N \sim ScDPPP(m, \point)$) \label{not:scdppp} if there exists a Poisson random measure $\Lambda= \sum_{i=1}^\infty \delta_{\lambda_i}$  on $(0, \infty)$ with intensity measure $m$ and a point process $\point$ such that
$$N \eqd \sum_{i=1}^\infty \mbfs_{\lambda_i} \point_i,$$
where  $\point_1, \point_2, \ldots$ are independent copies of the point process $\point$ independent of $\Lambda$.
\end{defn}
\noindent As mentioned above, it is established in \citet{davydov:molchanov:zuyev:2008} (see Example~8.6 therein) that a point process $N$ is strictly $\alpha$-stable if and only if $N \sim ScDPPP(m_\alpha, \point)$ for some point process $\point$ (here $m_\alpha$ is as described in Example~\ref{example:regvarprocess}). The light-tailed analogue of this result has been proved in a novel approach by \citet{maillard:2013}.

\citet{bhattacharya:hazra:roy:2014} also introduced the following slightly more general notion in parallel to \citet{subag:zeitouni:2014}. A point process $M$ is called a {\it randomly scaled scale-decorated Poisson point process} (SScDPPP) with intensity measure $m$ and scale-decoration $\point$ and random scale $U$ (denoted by $N \sim ScDPPP(m, \point, U)$) if $M \eqd \mbfs_U N$ where $N \sim SScDPPP(m, \point)$ \label{not:sscdppp} and $U$ is a positive random variable independent of $N$. As we shall see in the next subsection, these randomly scaled strictly stable point processes arise as limits of point processes induced by branching random walks with regularly varying displacements.

\subsection{Branching Random Walks} \label{subsec:brw}

First we recall that for a branching random walk (defined in Section~\ref{sec:introduction}), $Z_n$ denotes the number of particles in the $n^{th}$ generation ($Z_0 \equiv 1$),  and the sequence $\{Z_n: n \ge 0 \}$ forms a Galton-Watson branching process. We make some assumptions on the branching mechanism and the displacements,  as follows.
\begin{ass} \label{assump:model}
\emph{In our model, the point process $\pointl$ is of the form
\beq \label{eq:basicpointpro}
\pointl \overset{d}= \sum_{i=1}^{Z_1} \delta_{X_i},
\eeq
where $Z_1$ (the branching random variable) is as above and $\mathbf{X}= (X_1, X_2, \ldots)$ (displacement process) is a random element on the space $\real^\bbn$ independent of $Z_1$. We make the following assumptions on the displacements and the branching mechanism.}
\begin{enumerate}
\item \emph{\textbf{Assumptions on Displacements:} $X_1, X_2, \ldots$ are identically distributed with $\prob_{X_1}=\prob(X_1 \in \cdot) \in \regvar(\zrer, \alpha, \nu_\alpha) $, where $\nu_\alpha$ is as in \eqref{eq:nualpha}, and there exists a non-null measure $\lambda$ on $\rnz$ such that
 \aln{
\prob_{\bld{X}}=\prob(\bld{X} \in \cdot) \in \regvar (\rnz, \alpha, \lambda). \label{eq:jtregvar}
 }}
 \item \emph{\textbf{Assumptions on Branching Mechanism:} The underlying Galton-Watson process is supercritical with finite progeny mean, i.e., $\mu:=\exptn(Z_1) \in (1,\infty)$. Using the martingale convergence theorem, it is easy to see that there exists a non-negative random variable $W$, such that
\aln{
\frac{Z_n}{\mu^n} \to W \mbox{ almost surely} \label{eq:martngle_conv}
}
as $n \to \infty$. We shall further assume that the random variable $Z_1$ satisfies the Kesten-Stigum condition, i.e.,
\beq \label{eq:kestenstigum}
\exptn(Z_1 \log^+ Z_1) < \infty
\eeq
so that $W>0$ almost surely when conditioned on the survival of the tree.}
\end{enumerate}
\end{ass}

Let $\cals$ be the event that the underlying Galton-Watson tree survives to become an infinite tree. The conditional probability $\prob(\cdot\,|\,\cals)$ is denoted by $\pstar$ and the corresponding expectation operator is denoted by $\estar$. Let $\{\calf_n\}$ denote the natural filtration for the underlying Galton-Watson process.


Branching random walk can also be viewed as a collection of random variables indexed by the underlying Galton-Watson tree $\bbt=(\mathbb{V}, \mathbb{E})$ as follows. To each edge $\uu \in \mathbb{E}$, attach the displacement random variable $X_{\uu}$ of the corresponding offspring particle. For each $v \in \mathbb{V}$, let $\Iv$ denote the unique geodesic path from the root $o$ to $\uv$, and let $|\uv|$ denote the generation of $\uv$. \label{not:gen_uv} $\Sv$ denotes the position of the particle corresponding to $\uv$. Then, clearly, $\Sv = \sum_{\uu \in \Iv} \Xu$.
The collection $\{\Sv\}_{\uv \in \mathbb{V}}$ is the branching random walk with $\{\Sv\}_{|\uv|=n}$ forming the $n^{th}$ generation.  Note that \eqref{eq:jtregvar} implies that there exists an increasing sequence $\{c_n\} = \{b_{[\mu^n]}\}$ (where $b_n$ is as in Definition~\ref{defn:hlregvar}) of positive real numbers such that
\aln{
\mu^n \prob(c_n^\inv \bld{X} \in \cdot) \hlconv \lambda(\cdot) \label{eq:jtregvar_cn}
}
in $\mathbb{M}(\rnz)$ as $n \to \infty$. We are interested to find the weak limit (under $\pstar$) of the point process sequence
\beq \label{eq:basicseqpoint}
N_n = \sum_{|\uv|=n} \delta_{c_n^\inv \Sv}, \;\; n \geq 1
\eeq
of properly normalized positions of the $n^{th}$ generation particles.

To describe the limiting point process, we need to introduce some more notations as follows. Let $\poi$ be a Poisson random measure
\aln{
\poi  = \sum_{l\ge 1} \delta_{(\xi_{l1}, \, \xi_{l2}, \,\ldots)} =: \sum_{l\ge 1} \delta_{\pmb{\xi}_l} \label{eq:defn_poi}
}
on $\rnz$ with intensity measure $\lambda$ and independent of $W$.  Let $V$ be a positive integer-valued random variable with probability mass function
\aln{
\prob(V=v) = \frac{1}{s} \prob(Z_1 =v) \sum_{i=0}^\infty \frac{1}{\mu^i} \Big(1- \prob(Z_i =0)^v \Big), \;\; v \in \bbn, \label{eq:pmf_V1}
}
where $s$ is the normalising constant.
Suppose that $\bld{T}$ is an $\bbn_0^\bbn$-valued random variable and its probability mass function conditioned on $V$ is given as follows:
\aln{
\prob(\bld{T}= \bld{y} | V=v) = \begin{cases} 0 & \mbox{ if } y_k>0  \mbox{ for some } k>v \mbox{ or } \bld{y} =\bld{0}, \\
\frac{1}{s_v} \sum_{i=0}^\infty \frac{1}{\mu^i} \prod_{m=1}^v \prob(Z_i = y_m ) & \mbox{ otherwise, }\end{cases} \label{eq:cond_pmf_T1}
}
where $\bld{y}=(y_1, y_1, \ldots) \in \bbn^\infty$, $v \in \bbn$, and $s_v$ is the normalising constant.
Finally, we take a collection $\{(V_l, \bld{T}_l) : l \in \bbn\}=\{(V_l, (T_{l1}, T_{l2}, \ldots)) : l \in \bbn\}$ of independent copies of $(V, \bld{T})$ that is independent of $W$ and $\poi$. With these notations, we are now ready to state our second main result.

\begin{thm} \label{thm:mainthm2}
Suppose that Assumptions~\ref{assump:model} hold and consider the point process sequence $\{N_n\}$ defined by \eqref{eq:basicseqpoint} with $c_n$ as in \eqref{eq:jtregvar_cn}. Under $\pstar$, $N_n$ converges weakly (as $n \to \infty$) to the point process
\begin{equation}
 N_* \eqd \sum_{l=1}^\infty \sum_{k=1}^{V_l} T_{lk} \, \delta_{( s \mu^\inv W )^{1/\alpha} \xi_{lk}} \label{eq:rep_limit_measure}
\end{equation}
in the space $\scrm(\zrer)$. Moreover, the limiting point process $N_*$ is a randomly scaled scale-decorated Poisson point process (SScDPPP).
\end{thm}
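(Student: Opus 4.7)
The plan is to prove the weak convergence of $\{N_n\}$ under $\pstar$ via a one-large-jump analysis combined with Theorem \ref{thm:mainthm1}, working throughout conditionally on the underlying Galton-Watson tree $\bbt$. First, I would set up a truncation exploiting the joint regular variation \eqref{eq:jtregvar}. Fix $K, B > 0$. Call an edge $\uu$ $B$-\emph{large} at scale $n$ if $c_n^\inv |\Xu| > B$, and let $N_n^{(K,B)}$ denote the restriction of $N_n$ to atoms coming from vertices $\uv$ whose geodesic $\Iv$ carries a $B$-large edge in some generation $i \leq K$. By the one-large-jump principle for regularly varying random vectors, combined with \eqref{eq:jtregvar_cn}, the contributions to $N_n$ from vertices whose geodesic has no $B$-large edge, or whose large edge lies in a generation greater than $K$, should vanish under $\pstar$ as $n \to \infty$ followed by $B \to \infty$ and $K \to \infty$. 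This reduces the problem to identifying the weak limit of $N_n^{(K,B)}$.

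Next, I would analyse $N_n^{(K,B)}$. For each vertex $\uv$, denote by $\bld{X}(\uv) = (X_1(\uv), X_2(\uv), \ldots)$ the vector of offspring displacements at $\uv$, which is an independent copy of $\bld{X}$. For fixed $i \leq K$, the joint regular variation together with \eqref{eq:jtregvar_cn} yields that $\sum_{|\uv|=i} \delta_{c_n^\inv \bld{X}(\uv)}$ converges to a Poisson random measure on $\rnz$ with intensity $\mu^i \lambda(\cdot)$, independent of the descendant subtrees rooted at the relevant offspring. For each offspring $\uu$ of such a vertex, the number of its $n$-th generation descendants is an independent copy of $Z_{n-i-1}$, which under \eqref{eq:kestenstigum} satisfies $Z_{n-i-1}/\mu^{n-i-1} \to W^{(\uu)}$ almost surely. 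Because every non-large displacement is of order $o(c_n)$, the cluster of $n$-th generation descendants of the large-jump vertex $\uv$, rescaled by $c_n^\inv$, concentrates at the atoms $\{c_n^\inv X_k(\uv)\}$ with the corresponding subtree multiplicities, matching the cluster structure encoded by \eqref{eq:pmf_V1}--\eqref{eq:cond_pmf_T1}. Averaging the generation $i$ of the large edge with weights $\mu^{-i}$ --- reflecting the $\mu^i$ vertices at generation $i$ relative to the normalization $c_n = b_{[\mu^n]}$ --- produces exactly the joint law of $(V, \bld{T})$, with the constants $s, s_v$ arising from making the resulting laws probability measures.

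Assembling these pieces, the limit of $N_n^{(K,B)}$ takes the form of a superposition indexed by the Poisson atoms $\{\pmb{\xi}_l\}$ of $\poi$ in \eqref{eq:defn_poi}, the $l$-th atom contributing a cluster of the form $\sum_{k=1}^{V_l} T_{lk}\, \delta_{\xi_{lk}}$. The random scale $(s\mu^\inv W)^{1/\alpha}$ in \eqref{eq:rep_limit_measure} emerges by applying the scaling identity \eqref{eq:homregvar} of $\lambda$ to absorb the prefactor $s\mu^\inv$ together with the martingale limit $W$ (arising from $Z_n/\mu^n \to W$ almost surely on $\cals$) into the atom locations. The SScDPPP property then follows by recognizing the resulting object, via Theorem \ref{thm:mainthm1}, as a randomly scaled version of the scale-decorated Poisson process associated with $\lambda$ and the resulting decoration cluster.

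The main obstacle will be the rigorous justification of the truncation step: correlations of displacements within a geodesic and across offspring of a common parent preclude a direct application of i.i.d.\ superposition arguments, so the residual Laplace functional must be bounded conditionally on $\bbt$ and then combined with tight moment control on $Z_n$ supplied by the Kesten-Stigum hypothesis $\exptn[Z_1 \log^+ Z_1] < \infty$. Justifying the interchange of the successive limits $n \to \infty$ and $B, K \to \infty$ requires the vague continuity of the limiting Poisson process together with careful tightness control in the topology of $\scrm(\zrer)$.
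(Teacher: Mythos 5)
There is a genuine structural error in your truncation step: you place the single large jump in a generation $i\le K$ near the \emph{root}, whereas it must occur within the last $O(1)$ generations, i.e.\ in generations $n-K+1,\dots,n$ near the \emph{leaves}. For fixed $i$, generation $i$ contains only $Z_i$ (almost surely finitely many) vertices, each carrying an independent copy of $\bld{X}$ with $\prob(c_n^{-1}\bld{X}\in A)\sim \mu^{-n}\lambda(A)$ for $A$ bounded away from $\bld{0}_\infty$; hence $\sum_{|\uv|=i}\delta_{c_n^{-1}\bld{X}(\uv)}$ converges to the null measure, not to a Poisson random measure with intensity $\mu^i\lambda$. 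With your definition, $N_n^{(K,B)}$ is asymptotically empty and all the mass lives in the part you discard as negligible. The correct balance is that generation $n-i$ contains roughly $\mu^{n-i}= \mu^n\cdot\mu^{-i}$ vertices, which against the normalisation $c_n=b_{[\mu^n]}$ from \eqref{eq:jtregvar_cn} produces the summable weights $\mu^{-i}$ appearing in \eqref{eq:pmf_V1}--\eqref{eq:cond_pmf_T1}; correspondingly the multiplicities $T_{lk}$ are the \emph{finite} numbers of $n$-th generation descendants (copies of $Z_i$) of the offspring of a vertex sitting $i+1$ generations above the leaves, not copies of $Z_{n-i-1}\approx\mu^{n-i-1}W^{(\uu)}$, which diverge and are incompatible with the integer-valued $T_{lk}$ in \eqref{eq:rep_limit_measure}. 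Your own write-up betrays the inconsistency: you average over $i$ with weights $\mu^{-i}$ while asserting there are $\mu^i$ competing vertices at generation $i$. (The threshold in your definition of a $B$-large edge, $c_n^{-1}|\Xu|>B$ with $B\to\infty$, is also backwards: one must retain all edges exceeding a small threshold $\delta$ and send $\delta\downarrow 0$, since in the vague topology on $\scrm(\zrer)$ only atoms bounded away from $0$ matter.)

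Beyond this, the proposal does not engage with the mechanism by which Theorem~\ref{thm:mainthm1} actually enters. The paper cuts the tree at generation $n-K$, prunes offspring numbers to at most $B$, regularises each pruned subtree into a full $B$-ary tree with dummy vertices, proves that the point process attached to a single regularised subtree is regularly varying in $\scrm_0$ with a limit measure $\Upsilon$ (Lemma~\ref{lemma:treerv}), and then applies the superposition argument of Theorem~\ref{thm:mainthm1} conditionally on $\calf_{n-K}$ to the $|D_{n-K}|\approx\mu^{n-K}W$ i.i.d.\ subtree processes; this conditional application is precisely where the random scale $W$ and the Cox/SScDPPP structure arise, and the explicit form of $N_*$ is then verified by matching Laplace functionals through the limits $n\to\infty$, $B\to\infty$, $K\to\infty$. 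In your proposal Theorem~\ref{thm:mainthm1} is invoked only rhetorically at the end, with no identification of which i.i.d.\ regularly varying point processes are being superposed, so even after correcting the geometry the argument would still be missing its central step.
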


The above result extends Theorem~2.1 of \citet{bhattacharya:hazra:roy:2014} to the case where the displacements of particles coming from the same parent are allowed to be dependent. The proof, however, is much more involved due to presence of a stronger dependence among the displacements coming from the same parent and uses Theorem~\ref{thm:mainthm1} above as one its main ingredients. As a consequence of Theorem~\ref{thm:mainthm2}, we can compute the asymptotic distribution of the position of the rightmost particle in the $n^{th}$ generation, extending Theorem~1 of \citet{durrett:1983} to the dependent displacements case. Qualitatively speaking, the rightmost particle exhibits a similar long run behaviour although its limiting distribution has a scaling constant that depends on the measure $\lambda$, as shown by the following corollary.

\begin{cor} \label{thm:maxima}
Define $M_n = \max_{|\uv|=n} S_\uv$ to be the position of the rightmost particle of the $n^{th}$ generation. Under the assumptions of Theorem \ref{thm:mainthm2}, for every $x>0$,
\aln{
\lim_{n \to \infty} \pstar\big( c_n^\inv M_n \leq x\big) = \estar \Big[ \exp \Big\{- \kappa_\lambda W x^{-\alpha} \Big\}\Big] , \label{eq:maxima_lemma}
}
where $\kappa_\lambda >0$ is a deterministic constant that depends on $\lambda$ and is specified in \eqref{eq:maxima_c_lambda} below.
\end{cor}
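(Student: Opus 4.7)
The plan is to derive the corollary from Theorem~\ref{thm:mainthm2} by reducing the maxima statement to a void probability. Since
\[
\{c_n^\inv M_n \leq x\} = \{N_n((x,\infty]) = 0\},
\]
it suffices to show $\pstar(N_n((x,\infty]) = 0) \longrightarrow \pstar(N_*((x,\infty]) = 0)$ and then to evaluate the right-hand side. The set $(x,\infty]$ is relatively compact in $\zrer$; because the first-coordinate marginal of $\lambda$ coincides with $\nu_\alpha$ (which is atomless) and $W > 0$ under $\pstar$, the representation \eqref{eq:rep_limit_measure} forces $N_*(\{x\}) = 0$ almost surely. Combined with the weak convergence $N_n \Rightarrow N_*$ from Theorem~\ref{thm:mainthm2}, a standard Portmanteau argument for the integer-valued functional $\point \mapsto \point((x,\infty])$ yields the required convergence of void probabilities.

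Next I would compute the limiting void probability explicitly. Conditioning on $W$ and setting $U := (s\mu^\inv W)^{1/\alpha}$, the family $\{(\pmb{\xi}_l, V_l, \bld{T}_l)\}_{l\geq 1}$ is, given $W$, a Poisson point process on $\rnz \times \bbn \times \bbn_0^\bbn$ with intensity $\lambda \otimes \prob_{(V,\bld{T})}$, since the marks $(V_l, \bld{T}_l)$ are i.i.d.\ and independent of $\poi$. The event $\{N_*((x,\infty]) = 0\}$ occurs iff no Poisson atom $(\pmb{\xi}_l, V_l, \bld{T}_l)$ satisfies the condition ``there exists $k \leq V_l$ with $T_{lk} \geq 1$ and $\xi_{lk} > x/U$''. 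By the void probability formula for a Poisson process,
\[
\pstar\big(N_*((x,\infty]) = 0 \,\big|\, W\big) = \exp\bigg\{-\int_{\rnz} q(\pmb{\xi}; x/U)\, \lambda(d\pmb{\xi})\bigg\},
\]
where $q(\pmb{\xi}; y) := \prob\big(\exists\, k \leq V : T_k \geq 1,\ \xi_k > y\big)$, with $(V, \bld{T})$ distributed as in \eqref{eq:pmf_V1}--\eqref{eq:cond_pmf_T1}.

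Applying the scaling identity \eqref{eq:homregvar} via the change of variables $\pmb{\eta} = (U/x)\pmb{\xi}$ collapses $q(\pmb{\xi}; x/U)$ to $q(\pmb{\eta}; 1)$ and produces the factor $(x/U)^{-\alpha} = s\mu^\inv W x^{-\alpha}$. Defining
\begin{equation}\label{eq:maxima_c_lambda}
\kappa_\lambda := s\mu^\inv \int_{\rnz} \prob\big(\exists\, k \leq V : T_k \geq 1,\ \eta_k > 1 \big) \, \lambda(d\pmb{\eta}),
\end{equation}
one obtains $\pstar(N_*((x,\infty]) = 0 \mid W) = \exp(-\kappa_\lambda W x^{-\alpha})$, and taking $\estar$ gives \eqref{eq:maxima_lemma}.

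The principal obstacle will be verifying that $\kappa_\lambda$ is finite and strictly positive. Finiteness holds because $\{q(\pmb{\eta}; 1) > 0\} \subseteq \{\sup_k |\eta_k| > 1\}$, a set bounded away from $\bld{0}_\infty$ in $\rnz$ and hence of finite $\lambda$-measure by Definition~\ref{defn:hlregvar}; positivity follows from the non-triviality of $\lambda$ and the facts that $V \geq 1$ and $\bld{T} \neq \bld{0}$ almost surely, ensuring $q(\pmb{\eta}; 1) > 0$ on a set of positive $\lambda$-measure.
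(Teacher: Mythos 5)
Your overall route is the same as the paper's: reduce $\{c_n^\inv M_n \le x\}$ to the void event $\{N_n((x,\infty])=0\}$, pass to the limit, condition on $W$ so that $\{(\pmb{\xi}_l,V_l,\bld{T}_l)\}$ becomes a marked Poisson process, apply the Poisson void-probability formula, and use the $-\alpha$-homogeneity of $\lambda$ to extract the factor $Wx^{-\alpha}$. The paper carries out the same computation by partitioning $\rnz$ into the cells $H^v_{i_1,\ldots,i_v}$ and summing; your $q(\pmb{\xi};y)$ is just the integral form of that sum, and your formula for $\kappa_\lambda$ agrees with \eqref{eq:maxima_c_lambda} after unravelling the normalising constants (the residual factor of $\mu^{-1}$ traces back to an inconsistency in the paper itself between the scaling $(s\mu^\inv W)^{1/\alpha}$ in \eqref{eq:rep_limit_measure} and the $(sW)^{1/\alpha}$ used in its proof of the corollary, so I do not count it against you).

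There is, however, a genuine gap in your finiteness argument for $\kappa_\lambda$ --- which you correctly flagged as the principal obstacle and then resolved incorrectly. The set $\{\pmb{\eta}\in\rnz : \sup_k|\eta_k|>1\}$ is \emph{not} bounded away from $\bld{0}_\infty$ in the product topology on $\real^\bbn$: the point with a single coordinate equal to $2$ in position $j$ converges to $\bld{0}_\infty$ as $j\to\infty$, so this set meets every ball $B(\bld{0}_\infty,r)$. Consequently it need not have finite $\lambda$-measure, and in the basic i.i.d.\ case it genuinely does not: $\lamiid\{\pmb{\eta}:\sup_k|\eta_k|>1\}=\sum_{i\ge1}\nu_\alpha\{|x|>1\}=\infty$. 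Your conclusion is still true, but the correct reason is the one the paper uses: bound $q(\pmb{\eta};1)$ by the union bound $\sum_{k\ge1}\bbo_{(\eta_k>1)}\prob(V\ge k)$, so that
\begin{equation*}
\int_{\rnz} q(\pmb{\eta};1)\,\lambda(\dtv\pmb{\eta}) \;\le\; \lambda^{(1)}\big((1,\infty]\big)\sum_{k\ge1}\prob(V\ge k) \;=\; \lambda^{(1)}\big((1,\infty]\big)\,\exptn(V),
\end{equation*}
which is finite because all one-dimensional marginals of $\lambda$ coincide (with $\nu_\alpha$) and $\exptn(V)<\infty$ (since $s_v\le\mu/(\mu-1)$ uniformly in $v$, $V$ is dominated by a constant times the size-biased $Z_1$, and $\mu=\exptn(Z_1)<\infty$). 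With this repair, and granting the routine continuity point $N_*(\{x\})=0$ a.s.\ that you already address, your argument matches the paper's.
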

\noindent
\subsection{Discussions}
As mentioned earlier, \citet{brunet:derrida:2011} predicted that the limits of point processes of properly normalized positions of particles in branching Brownian motion and branching random walks should be \emph{decorated Poisson point processes} (DPPP) and they should satisfy a \emph{superposability property}. These conjectures were established for branching random walks with light-tailed step sizes by \citet{madaule:2011}, and for branching Brownian motion by \citet{arguin2012poissonian, arguin2013extremal} and \citet{aidekon2013branching}. However, all of these works contained an extra random shift coming from the limit of the underlying derivative martingale. It is expected that superposability will change to stability and DPPP will become ScDPPP as we pass from light-tailed to heavy-tailed displacements. In addition to these, the random shift will now be converted to a random scaling based on the martingale limit $W$ giving rise to the last part of Theorem~\ref{thm:mainthm2}. This part, however, is of a purely existential nature in the sense that the SScDPPP representation cannot be constructed explicitly in most cases. We have been able to compute it only in two very special cases: (i) when the displacements are i.i.d.\ and (ii) when $Z_1$ is a bounded random variable; see Corollaries~\ref{propn:mainresult_bhr} and \ref{cor:bounded_offspring} below.


We would also like to stress that the role of the derivative martingale is washed away in the heavy-tailed case because, with very high probability, exactly one of the independent copies of the point process $\pointl$ (along with its descendants) survives the scaling by $c_n$. This can be thought of as a ``principle of one big jump'' at the level of point processes (see Lemma \ref{lemma:Limit_Laplace} and Lemma \ref{lemma:treerv}). In the context of branching random walks with heavy-tailed displacements, this principle has been observed for displacements; see \citet{durrett:1979, durrett:1983}, \citet{bhattacharya:hazra:roy:2014} and \citet{maillard:2015}. However, one a big jump principle for point processes is novel and can be used to give a heuristic justification of the limit $N_*$ as described below.

Exactly one of the point processes will survive the normalization and, using a standard argument, it is easy to see that this point process will have progeny up to $L$ $(\sim Geo(1/\mu))$ many generations in the limit. If $\prob(Z_1=0)=0$, then the surviving point process will have $Z_1$ many contributing points because the absence of leaves will force each of its points to go all the way down to the $L^{th}$ generation. These points $\xi_1, \xi_2, \ldots, \xi_{Z_1}$ will repeat $Z^1_L$, $Z^2_L$, \ldots , $Z^{Z_1}_L$ (these are the $T_{lk}$'s in our notation) many times, respectively, where $\{Z^1_n\}, \{Z^2_n\}, \{Z^3_n\}, \ldots $ are independent copies of the underlying Galton Watson process independent of $Z_1$. On the other hand, when $\prob(Z_1=0)>0$, the so-called ``largest point process'' may not contribute at all because all of the trees below it may die. Therefore, one needs to condition on at least one tree below to survive. This means, in particular, that the number of contributing points will become $V$, which has the same distribution as $Z_1$ size-biased by the event that at least one of the $Z_1$ trees below survive up to the $L^{th}$ generation. The conditional distribution of the $T_{lk}$'s given $V$ can now be justified in a similar fashion - they have the same distribution as in the $\prob(Z_1=0)=0$ case, except that we have to condition on the survival of at least one of these $V$ many trees that lie beneath.

\section{Scale-decorated Poisson Point Processes} \label{sec:stablepp}
In this section, we study the stability of point measures and derive equivalent criteria for SScDPPP using Laplace functionals. The study of these equivalent criteria are motivated by the recent investigations of \citet{subag:zeitouni:2014} and also by the work of \citet{davydov:molchanov:zuyev:2008}. Laplace functionals of such point processes become particularly important in analysing the limit arising in the branching random walk. We shall discuss this later. We begin by introducing some notations that will be useful throughout this section.

Let $\ckr$ \label{not:ckr} denote the space of all nonnegative continuous functions defined on $\bar{\mathbb{R}}_0$ with compact support (and hence vanishing in a neighbourhood of $0$). By an abuse of notation, for a measurable function $f: \bar{\mathbb{R}}_0 \to [0, \infty)$, we denote by $\mbfs_{y}f(\cdot)$ the function $f(y \cdot)$. For a point process $N$ on $\bar{\real}_0$ and any $y>0$,  one has $\int f \dtv \mbfs_y N = \int \mbfs_y f \dtv N$. The Laplace functional of a point process $N$ will be denoted by
\aln{
\Psi_N  (f) = \exptn \Big( \exp \Big\{- N(f) \Big\} \Big), \label{eq:lap_not}
}
where $N(f) = \int f \dtv N$. \label{not:int_measure} In parallel to the notion of shifted Laplace functional from \citet{subag:zeitouni:2014}, we define the {\bf scaled Laplace functional} as $\Psi_N(f\|y) := \Psi_N(\mbfs_{y^\inv} f)$  \label{not:scale_laplace} for some $y>0$. We define $[g]_{sc}=\{f \in \ckr : f = \mbfs_y g \mbox{ for some } y>0 \}$ \label{not:g_sc} to be the equivalence class of $g$ under equality of two functions up to scaling. Let us define by $\fralpha(x)$ the Frech\'et distribution function, i.e., for each $\alpha>0$,
$$ \fralpha(x)=\exp(-x^{-\alpha}), \qquad x>0. \label{not:fralpha}$$

\begin{defn}[Scale-uniquely supported]
The scaled Laplace functional of the point process $N$ is uniquely supported on $[g]_{sc}$ if,  for any $f \in \ckr$, there exists a constant $c_f$ (depending on $f$ only) such that $\Psi_N( f\|y )= g(y c_f)$ for all $y>0$.
\end{defn}
The notion of scale-uniquely supported is intimately tied to the behaviour of the scale-decorated Poisson point process. In fact, we show the relation  between the equivalence class of $\fralpha$ and the scaled-Laplace functional of the SScDPPP.  Note that, since in Theorem \ref{thm:mainthm2} for branching random walk the scaling is random, a  study of Poisson processes with random scaling will be needed.
The following proposition is the analogue of Theorem 10 in \citet{subag:zeitouni:2014}.
\begin{propn}\label{thm:sub_zet}
Let $N$ be a locally finite point process on $\zrer$ satisfying the following assumptions:
\aln{
\prob(N(\bar{\real}_0) > 0) >0 \mbox{ and  }  \exptn \Big( N(\bar{\real}_0 \setminus (-a,a)) \Big) < \infty \label{eq:ass_sub_zet}
 }
for some $a>0$. Let $g : \real_+ \to \real_+$, be a function. Then the following statements are equivalent:
\let\myenumi\theenumi
\let\mylabelenumi\labelenumi
\renewcommand{\theenumi}{Prop\myenumi}
\renewcommand{\labelenumi}{{\rm (\theenumi)}}
\begin{enumerate}

\item $\Psi_N(f\|\cdot)$ is scale-uniquely supported on $[g]_{sc}$ for all $f\in \ckr$. \label{sub:zet:prop1}

\item $\Psi_N(f\|\cdot)$ is scale-uniquely supported on $[g]_{sc}$  for all $f\in \ckr$ and, for some positive random variable $W$,
\beq \label{eq:SLF}
g(y) = \exptn \Big( \fralpha( ycW^\inv)  \Big),
\eeq
where $\fralpha(x) = \exp\{ x^{-\alpha}\}$ denotes the distribution function of the Frechet-$\alpha$ random variable and $c>0$. \label{sub:zet:prop2}

\item $N \sim SScDPPP( \malpha(\dtv x), \point,W)$ for some point process $\point$ and some positive random variable $W$,  where $m_\alpha(\cdot)$ is described in Example \ref{example:regvarprocess}. \label{sub:zet:prop3}

\end{enumerate}

\end{propn}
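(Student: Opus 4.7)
The plan is to establish the cyclic implications (Prop3) $\Rightarrow$ (Prop2) $\Rightarrow$ (Prop1) $\Rightarrow$ (Prop3), where the middle step is immediate and the last carries the real content.

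For (Prop3) $\Rightarrow$ (Prop2), I would proceed by direct computation. Writing $N \eqd \mbfs_W \sum_{i\geq 1} \mbfs_{\lambda_i}\point_i$, where $\{\lambda_i\}$ are the atoms of a Poisson random measure on $(0,\infty)$ with intensity $m_\alpha$, conditioning on $W$ and applying the classical Laplace functional formula for PRMs together with the substitution $u = \lambda W$ (which absorbs $W$ as $W^\alpha$ via the homogeneity $m_\alpha(y\,\cdot) = y^{-\alpha} m_\alpha(\cdot)$) gives
\[
\Psi_N(f) = \exptn\bigl[\exp(-W^\alpha B_f)\bigr], \qquad B_f := \int_0^\infty \bigl(1 - \Psi_\point(\mbfs_u f)\bigr)\, \alpha u^{-\alpha-1}\, du.
\]
Applying the same substitution inside $B_{\mbfs_{y^{-1}} f}$ yields the scaling $B_{\mbfs_{y^{-1}} f} = y^{-\alpha} B_f$, so setting $c_f := B_f^{-1/\alpha}$ we obtain
\[
\Psi_N(f \| y) = \exptn\bigl[\fralpha(y c_f W^{-1})\bigr] = g(yc_f), \qquad g(z) := \exptn[\fralpha(z W^{-1})],
\]
which proves both assertions of (Prop2); the constant $c$ there can be absorbed into the law of $W$. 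The implication (Prop2) $\Rightarrow$ (Prop1) is immediate.

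The substantive direction is (Prop1) $\Rightarrow$ (Prop3), which I would carry out in the spirit of Theorem~10 of \citet{subag:zeitouni:2014}, translated from the additive shift setting to the multiplicative scale setting. First, applying the defining identity to both $f$ and $\mbfs_{y^{-1}} f$ and equating
\[
\Psi_N(f \| yy') = g(yy' c_f) \quad \text{and} \quad \Psi_N(\mbfs_{y^{-1}} f \| y') = g(y' c_{\mbfs_{y^{-1}} f}),
\]
together with strict monotonicity of $g$ on its effective range (forced by \eqref{eq:ass_sub_zet}), yields the scaling identity $c_{\mbfs_{y^{-1}} f} = y c_f$. Next, taking independent copies $N_1, \dots, N_n$ of $N$ and positive scalars $b_1, \dots, b_n$, the Laplace functional of the superposition factorizes as $\prod_i g(b_i^{-1} c_f)$; feeding this back through (Prop1) applied to the superposition and varying the $b_i$'s produces a Cauchy-type functional equation that, combined with the scaling from the first step and the regularity of $g$ inherited from \eqref{eq:ass_sub_zet}, pins down $g$ to the Frechet-mixture form $g(y) = \exptn[\fralpha(y c W^{-1})]$. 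Finally, conditioning on the resulting random scale $W$ reduces the Laplace functional of $N$ to that of a strictly $\alpha$-stable point process, and Example~8.6 of \citet{davydov:molchanov:zuyev:2008} delivers a decoration process $\point$ with $N \mid W \sim ScDPPP(m_\alpha, \point)$, which is (Prop3).

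The main obstacle is the Cauchy-type step: the hypothesis (Prop1) constrains $g$ only through the single scalar $c_f$, so deducing the precise Frechet-mixture form requires carefully combining the scaling $c_{\mbfs_{y^{-1}} f} = y c_f$, the superposition factorization, and the regularity of $g$ provided by \eqref{eq:ass_sub_zet} in order to rule out all non-Frechet solutions and to identify the exponent as exactly $\alpha$. Once $g$ is pinned down, the reconstruction of the decoration $\point$ is a routine consequence of the Davydov-Molchanov-Zuyev characterization of strictly stable point processes.
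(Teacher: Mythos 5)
Your implications (Prop3) $\Rightarrow$ (Prop2) $\Rightarrow$ (Prop1) are fine: conditioning on $W$ and using the homogeneity of $m_\alpha$ to get $\Psi_N(f)=\exptn\big[\exp\{-W^\alpha B_f\}\big]$ with $B_{\mbfs_{y^{-1}}f}=y^{-\alpha}B_f$ is the standard computation (modulo noting that $B_f<\infty$, which follows from \eqref{eq:ass_sub_zet}). You should be aware, though, that the paper does not actually prove this proposition: it reduces it to Theorem~10 of \citet{subag:zeitouni:2014} via the correspondence $\sum\delta_{a_i}\leftrightarrow\sum\delta_{\log a_i}$ between $\mathscr{M}((0,\infty])$ and $\mathscr{M}((-\infty,\infty])$, asserts that the two-sided case requires only mild moulding, and refers to \citet{thesis:ayan} for sketches. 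So for the hard direction you are attempting more than the paper records, and that is where the problem lies.

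The gap is in your (Prop1) $\Rightarrow$ (Prop3). Hypothesis (Prop1) is a statement about the single process $N$ only; it says nothing about the superposition $\sum_{i=1}^n\mbfs_{b_i}N_i$ of independent copies. The scaled Laplace functional of that superposition is indeed $\prod_i g(yb_i^{-1}c_f)$, but you have no license to ``apply (Prop1) to the superposition,'' i.e., to assert that this product is again of the form $g(y\cdot\mathrm{const})$. That assertion is essentially the strict stability of $N$ that you are trying to establish, so the step is circular, and without it your ``Cauchy-type functional equation'' for $g$ never materializes. In the Subag--Zeitouni scheme the functional equation has to be extracted from the single process, by feeding suitably combined test functions (e.g.\ $f_1+\mbfs_{u^{-1}}f_2$, which still lie in $\ckr$, so (Prop1) does apply to them) into the unique-support identity and using the finiteness assumption in \eqref{eq:ass_sub_zet} to control the asymptotics as the supports separate; this decoupling on a single realization of $N$ is the real content of the hard direction, and your sketch replaces it with an unjustified appeal to independence. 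Either carry out that argument in the multiplicative setting, or do what the paper intends and invoke Theorem~10 of \citet{subag:zeitouni:2014} directly after the logarithmic change of variables (handling the two-sided support separately).
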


 The next result is an immediate corollary of the above proposition. The first two equivalent conditions were also studied in \citet{davydov:molchanov:zuyev:2008}.
\begin{cor} \label{propn:strict_stable}
Assume that $\prob(N(\zrer) >0) >0$. Then the following statements are equivalent:
\begin{enumerate}
\let\myenumi\theenumi
\let\mylabelenumi\labelenumi
\renewcommand{\theenumi}{B\myenumi}
\renewcommand{\labelenumi}{{\rm (\theenumi)}}
\item $N$ is a scale-decorated Poisson point process with Poisson intensity $\nu_\alpha(dx)$, where $\nu_\alpha$ is as defined in~\eqref{eq:nualpha}. \label{item:DM1}

\item $N$ is a strictly $\alpha$-stable point process.\label{item:DM2}

\item The scaled Laplace functional of $N$ is scale-uniquely supported on the class $[\fralpha]_{sc}$.\label{item:DM3}

\end{enumerate}
\end{cor}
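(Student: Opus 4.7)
The plan is to close the cycle $(B1) \Rightarrow (B2) \Rightarrow (B3) \Rightarrow (B1)$. Two of the three links are direct computations, while the third is a specialization of Proposition \ref{thm:sub_zet} to $g = \fralpha$, supplemented by a uniqueness-of-Laplace-transform argument that forces the random scale furnished by that proposition to degenerate.

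For $(B1) \Rightarrow (B2)$, I would write $N \eqd \sum_{i \ge 1} \mbfs_{\lambda_i} \point_i$ with $\Lambda = \sum_i \delta_{\lambda_i}$ a Poisson random measure of intensity $\nu_\alpha$. The homogeneity $\nu_\alpha(b \bullet \cdot) = b^{-\alpha} \nu_\alpha(\cdot)$ of \eqref{eq:homregvar} implies that $\mbfs_b \Lambda$ is a PRM of intensity $b^\alpha \nu_\alpha$, so by independent superposition of PRMs, $\mbfs_{b_1} N_1 + \mbfs_{b_2} N_2$ is a ScDPPP built on top of a PRM of intensity $(b_1^\alpha + b_2^\alpha) \nu_\alpha$. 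The PRM underlying $\mbfs_{(b_1^\alpha + b_2^\alpha)^{1/\alpha}} N$ has the same intensity, which yields the stability identity \eqref{eq:alphastablepp}.

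For $(B2) \Rightarrow (B3)$, taking Laplace functionals of \eqref{eq:alphastablepp} and using $\mbfs_b N(f) = N(\mbfs_b f)$ gives
\[
\Psi_N(\mbfs_{b_1} f) \, \Psi_N(\mbfs_{b_2} f) \;=\; \Psi_N(\mbfs_{(b_1^\alpha + b_2^\alpha)^{1/\alpha}} f), \qquad f \in \ckr.
\]
Fixing $f$ and setting $\phi_f(b) := -\log \Psi_N(\mbfs_b f)$ and $\psi_f(u) := \phi_f(u^{1/\alpha})$, this reduces to Cauchy's functional equation $\psi_f(u) + \psi_f(v) = \psi_f(u+v)$ for a measurable nonnegative $\psi_f$. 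Hence $\psi_f(u) = c_f u$ for some $c_f \ge 0$, so $\Psi_N(f \| y) = \Psi_N(\mbfs_{y^\inv} f) = \exp(-c_f y^{-\alpha}) = \fralpha(c_f^{-1/\alpha} y)$, which is exactly the statement that $\Psi_N(f \| \cdot)$ is scale-uniquely supported on $[\fralpha]_{sc}$.

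For $(B3) \Rightarrow (B1)$, I would apply Proposition \ref{thm:sub_zet} with $g = \fralpha$: its equivalences simultaneously furnish a decomposition $N \sim SScDPPP(\malpha, \point, W)$ together with a constant $c > 0$ satisfying $\fralpha(y) = \exptn[\fralpha(ycW^\inv)]$ for all $y > 0$. Substituting $t = y^{-\alpha}$ rewrites this identity as $\exp(-t) = \exptn\exp(-t\, c^{-\alpha} W^\alpha)$, so the Laplace transform of $c^{-\alpha} W^\alpha$ coincides with that of the point mass at $1$; by injectivity of Laplace transforms on $(0, \infty)$, one concludes $W \equiv c$ almost surely. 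The now-deterministic scale is then absorbed into the base PRM via the homogeneity \eqref{eq:homregvar}, producing a plain ScDPPP with intensity proportional to $\nu_\alpha$. The main delicate point lies precisely in this last upgrade from SScDPPP to ScDPPP — a step that rests on the rigidity of Laplace transforms on $(0, \infty)$ rather than on anything specific to point processes — after which the identification of intensities is purely bookkeeping driven by \eqref{eq:homregvar}.
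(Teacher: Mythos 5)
Your proposal is correct in substance, but it follows a genuinely more self-contained route than the paper, which offers essentially no argument of its own: it declares the corollary an ``immediate'' consequence of Proposition~\ref{thm:sub_zet}, notes that the equivalence of (\ref{item:DM1}) and (\ref{item:DM2}) is in \citet{davydov:molchanov:zuyev:2008}, and defers all details to \citet{subag:zeitouni:2014} (via the correspondence $\sum\delta_{a_i}\leftrightarrow\sum\delta_{\log a_i}$) and to \citet{thesis:ayan}. You instead close the cycle with two direct computations --- superposition of Poisson random measures together with the homogeneity \eqref{eq:homregvar} for (\ref{item:DM1})$\Rightarrow$(\ref{item:DM2}), and a Cauchy functional equation for (\ref{item:DM2})$\Rightarrow$(\ref{item:DM3}), where nonnegativity of $\psi_f$ already forces monotonicity and hence linearity, so no measurability caveat is needed --- and you invoke Proposition~\ref{thm:sub_zet} only for (\ref{item:DM3})$\Rightarrow$(\ref{item:DM1}), where injectivity of the Laplace transform on $(0,\infty)$ cleanly degenerates the random scale $W$ to the constant $c$. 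This isolates exactly where the nontrivial machinery enters and makes the ScDPPP/St$\alpha$S/Fr\'echet triangle transparent. Two caveats are worth recording. First, Proposition~\ref{thm:sub_zet} is stated under the moment condition $\exptn\big(N(\bar{\real}_0\setminus(-a,a))\big)<\infty$ from \eqref{eq:ass_sub_zet}, which is not among the corollary's hypotheses; this gap is inherited from the paper itself (which also derives the corollary from the proposition), but you should either add that hypothesis or verify that (\ref{item:DM3}) implies it before applying the proposition. Second, your final ``bookkeeping'' lands on a ScDPPP with Poisson intensity $c^{\alpha}\malpha$ on $(0,\infty)$; to match the stated normalization one absorbs the constant into the decoration via $ScDPPP(c^{\alpha}\malpha,\point)=ScDPPP(\malpha,\mbfs_{c}\point)$, and the residual discrepancy between $\malpha$ and the two-sided $\nu_\alpha$ appearing in (\ref{item:DM1}) is an imprecision of the statement itself, since Definition~\ref{defn:ScDPPP} places the Poisson points on $(0,\infty)$.
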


If the above point processes are supported on the positive part of the real line, then these results can easily be shown to be equivalent to the ones proved by \citet{subag:zeitouni:2014} via the canonical one to one correspondence between the spaces $\mathscr{M}((0, \infty])$ and $\mathscr{M}((-\infty,\infty])$ given by $\sum {\delta_{a_i}} \leftrightarrow \sum {\delta_{\log{a_i}}}$. In fact, the assumption of monotonicity of $g$ can be dropped from Corollary~3 of the aforementioned reference. When the points in Proposition~\ref{thm:sub_zet} (and Corollary~\ref{propn:strict_stable}) take both positive and negative values, one has to mildly mould the proof given in \citet{subag:zeitouni:2014} to the two-sided setup. This slight moulding is too straightforward to merit detailing.  See \citet{thesis:ayan} for sketches of proofs of Proposition~\ref{thm:sub_zet} and Corollary~\ref{propn:strict_stable} above.

\subsection{Proof of Theorem~\ref{thm:mainthm1}}

To prove this result, we shall start with computing the Laplace functional of the scaled superposition of the point processes $\pointl_i$, which are independent copies of $\pointl$. Let $f\in \ckr$ and let, for some $\delta>0$,  the support of $f$ be contained in the outside  of $B(0,\delta)$. Now, using independence and rearranging, we immediately get that
\begin{align}\label{eq:doa6}
\exptn \Bigg( \exp \bigg\{ - \sum_{i=1}^n \mbfs_{b_n^\inv} \pointl_i(f) \bigg\} \Bigg) &
=\Bigg[ 1- \frac{1}{n}  \Bigg( \int_{\scrmrz} \bigg(1- \exp\Big\{ - \nu(f) \Big\}\bigg) ~ n  \prob(\mbfs_{b_n^{-1} }\pointl \in \dtv \nu) \Bigg) \Bigg]^n.
\end{align}
Note that the convergence of the Laplace functional is equivalent to the convergence of the integral in~\eqref{eq:doa6}.

 Recall from Subsection~\ref{subsec:reg_var} that the regular variation of $\pointl$ is equivalent to the fact that, for every positive, bounded and continuous function $F$  vanishing outside a neighborhood of $\nullm$,
\beq \label{eq:doa8}
\int_{\scrmrz} F(\nu) n \prob(\mbfs_{b_n^\inv} \pointl \in \dtv\nu) \to \int_{\scrmrz} F(\nu) m^*(\dtv \nu).
\eeq
Here, if we choose $F = (1 - \exp{-\nu(f)})$, then $F$ is positive, bounded and continuous, but it is not immediate  whether it vanishes inside a neighborhood of $\nullm$. To bypass this technicality, we use the fact that $f$ vanishes outside a neighborhood of $0$. Fix an $\epsilon >0$, consider the function $F_\epsilon(\nu) = (1- \exp\{- (\int f \dtv \nu - \epsilon)_+\})$. Then it is clear that this function vanishes outside the ball $B(\nullm, \epsilon)$ under the vague metric and $F_\epsilon(\nu) \downarrow F(\nu)$ as $\epsilon\downarrow 0$. Now, by regular variation of $\pointl$, we get
\beq \label{eq:doa9}
\lim_{\epsilon \to 0} \lim_{n \to \infty} \int_{\scrmrz} F_\epsilon(\nu) n \prob(\mbfs_{b_n^\inv} \pointl \in \dtv \nu) = \lim_{\epsilon \to 0} \int_{\scrmrz} F_\epsilon(\nu) m^*(\dtv \nu) = \int_{\scrmrz} F(\nu) m^*(\dtv \nu).
\eeq
Hence to show that the limit of the integral in \eqref{eq:doa6} is the same as the integral in the right-hand side of \eqref{eq:doa9}, it is sufficient to show that
\beq \label{eq:doa10}
\lim_{\epsilon \to 0} \limsup_{n \to \infty}  \int_{\scrmrz}\Big| F_\epsilon (\nu) -F(\nu) \Big| n \prob(\mbfs_{b-n^\inv} \pointl \in \dtv \nu)  =0.
\eeq
Using $|e^{-x}-e^{-y}|\le |x-y|$, we get that
\begin{align} \label{eq:doa11}
  \int_{\scrmrz} \Big| F_\epsilon (\nu) -F(\nu) \Big| n \prob(\mbfs_{b_n^\inv} \pointl \in \dtv \nu) 
& \le  \int_{\scrmrz} \Big| ( \nu(f) -\epsilon)_+ -  \nu(f) \Big| n \prob(\mbfs_{b_n^\inv} \pointl \in \dtv \nu) \nonumber \\
&= 2 \epsilon n \prob \bigg[  \mbfs_{b_n^\inv} \pointl \in \{ \nu :  \nu(f)>0\}\bigg].
\end{align}
By the choice of $f$, we can get an upper bound for  the right-hand side of \eqref{eq:doa11}, namely,
\beq \label{eq:doa12}
\epsilon n \prob \bigg[ \mbfs_{b_n^\inv} \pointl\in \{\nu : \nu[\delta, \infty) \ge 1\} \bigg].
\eeq
Now it is important to note that $\{\nu : \nu[\delta, \infty) \ge 1\}$ is a closed set in $\scrmrz$ and $\nullm \nin \{\nu : \nu[\delta, \infty) \ge 1\}$. Hence using the portmanteau theorem (Theorem 2.1 in \citet{lindskog:resnick:roy:2014}) for HL-convergence, we get that $\limsup_{n \to \infty} n ~ \prob \bigg[ \mbfs_{b_n^\inv} \pointl \in \{\nu : \nu[\delta, \infty) \ge 1\} \bigg] \to m^*\Big(  \{\nu : \nu[\delta, \infty) \ge 1\} \Big)$. So~\eqref{eq:doa10} follows.
Now finally, using this convergence, we can write down the limiting Laplace functional
\beq \label{eq:doa14}
\lim_{n \to \infty } \exptn \bigg[ \exp \bigg\{ - \sum_{i=1}^n \mbfs_{b_n^\inv} \pointl_i(f) \bigg\} \bigg] = \exp \bigg\{ - \int_{\scrmrz} \Big(1- \exp\{-\nu(f)\} \Big) m^*(\dtv \nu) \bigg\}.
\eeq
It turns out that the scaled Laplace functional $m^*$ is scale-uniquely supported on $[\fralpha]$ and hence by Proposition~\ref{propn:strict_stable} (\eqref{item:DM3} implies \eqref{item:DM2}) it follows that the limit is a strictly $\alpha$-stable point process. Indeed, from the above convergence we have
\begin{equation}\label{eq:doa15}
\Psi_{m^*}(f\|y)= \exp \bigg\{ - y^{-\alpha} \int_{\scrmrz} \Big(1- \exp\{-  \nu(f)\} \Big) m^*(\dtv \nu) \bigg\}=\fralpha(yc_f)
\end{equation}
where
$$c_f^{-\alpha}=\int_{\scrmrz} \Big(1- \exp\{-  \nu(f)\} \Big) m^*(\dtv \nu). $$

\section{Computation of the Weak Limit of $N_n$} \label{sec:brw}

Recall from Subsection~\ref{subsec:brw} that, for $\uv\in \mathbb V$, we denote by $\Iv$ the unique geodesic path from the root to $\uv$. We first introduce the point process $\tilde N_n$ that takes into account the one large jump along a typical path $\Iv$. More precisely, we define
\beq \label{eq:olj1}
\tilde{N}_n = \sum_{|\uv| =n} \sum_{\uu \in \Iv} \delta_{c_n^\inv \Xu}.
\eeq
First we state the following important Lemma about the convergence of the point process $\tilde N_n$.
\begin{lemma}\label{lemma:one}
Under the assumptions of Theorem~\ref{thm:mainthm2}, we have under $\pstar$ that $\tilde N_n$ weakly converge to $N_*$.
\end{lemma}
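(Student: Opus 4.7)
The plan is to prove $\Psi_{\tilde N_n}(f) \to \Psi_{N_*}(f)$ for every $f \in \ckr$ via convergence of Laplace functionals. I first reorganize $\tilde N_n$ by parent vertex:
\[
\tilde N_n = \sum_{v:|v|<n} C_n^{(v)}, \qquad C_n^{(v)} := \sum_{i=1}^{Z_1^{(v)}} Z_{n-|v|-1}^{(v,i)} \, \delta_{c_n^{-1} X_i^{(v)}},
\]
where $\bld X^{(v)} = (X_1^{(v)}, X_2^{(v)}, \ldots)$ is the jointly regularly varying displacement vector to $v$'s children and $Z_m^{(v,i)}$ is the number of descendants of the $i$-th child of $v$ after $m$ further generations. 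Conditioning on the Galton--Watson tree $\bbt$, the $\bld X^{(v)}$ become i.i.d.\ copies of $\bld X$ independent of $\bbt$, so the conditional Laplace functional factorizes as $\prod_{v:|v|<n} \psi_n^{(v)}$ with $\psi_n^{(v)} := \exptn_{\bld X}[\exp(-\sum_i Z_{n-|v|-1}^{(v,i)} f(c_n^{-1} X_i))]$.

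Applying \eqref{eq:jtregvar_cn} to each factor gives $1-\psi_n^{(v)} = O(\mu^{-n})$, so the quadratic error in the expansion $-\log\prod\psi_n^{(v)} = \sum(1-\psi_n^{(v)}) + O(\sum(1-\psi_n^{(v)})^2)$ is negligible. It therefore suffices to show
\[
R_n := \sum_{v:|v|<n}(1-\psi_n^{(v)}) \;\longrightarrow\; W \sum_{L=0}^\infty \mu^{-(L+1)} A_L \quad \text{in $\pstar$-probability,}
\]
where $A_L := \exptn\int[1-\exp(-\sum_{i=1}^{Z_1} Z_L^{(i)} f(y_i))]d\lambda(\bld y)$ with $\{Z_L^{(i)}\}_i$ i.i.d.\ copies of $Z_L$ independent of $Z_1$. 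The key idea is to parametrize by the \emph{relative level} $L = n-|v|-1$ counted from generation $n$: for each fixed $L$, the number $Z_{n-L-1}$ of vertices at absolute level $n-L-1$ tends to infinity under $\pstar$, the subtrees below them are conditionally i.i.d., and combining \eqref{eq:jtregvar_cn} with the strong law of large numbers yields $\mu^n \sum_{|v|=n-L-1}(1-\psi_n^{(v)}) \sim Z_{n-L-1} A_L$. Dividing by $\mu^n$ and using $Z_{n-L-1}/\mu^{n-L-1} \to W$ under $\pstar$ gives the per-$L$ limit $W \mu^{-(L+1)} A_L$; the geometric decay in $L$ together with the uniform boundedness of $A_L$ justifies the interchange of limit and infinite sum, and bounded convergence then transfers to $\Psi_{\tilde N_n}(f) \to \estar[\exp(-W\sum_L \mu^{-(L+1)} A_L)]$.

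Finally, I match this with $\Psi_{N_*}(f)$. Campbell's formula for $\poi$ marked by i.i.d.\ copies of $(V, \bld T)$, combined with the scaling identity $\lambda(b\bullet\cdot) = b^{-\alpha}\lambda(\cdot)$ from \eqref{eq:homregvar}, yields
\[
\Psi_{N_*}(f) = \estar\Bigl[\exp\Bigl(-s\mu^{-1} W \!\int\!\bigl[1 - \exptn_{V,\bld T}\exp\bigl(-\textstyle\sum_{k=1}^V T_k f(y_k)\bigr)\bigr] d\lambda(\bld y)\Bigr)\Bigr].
\]
It then remains to verify the identity $\sum_{L=0}^\infty \mu^{-(L+1)} A_L = s\mu^{-1} \int[1 - \exptn_{V,\bld T}\exp(-\sum_k T_k f(y_k))] d\lambda$, which follows by expanding \eqref{eq:pmf_V1} and \eqref{eq:cond_pmf_T1}, interchanging finite sums, and applying the branching identity $\exptn[\prob(Z_i=0)^{Z_1}] = \prob(Z_{i+1}=0)$. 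The main technical obstacle is uniform control of the law-of-large-numbers fluctuations across levels and the infinite-sum interchange in the previous paragraph; both depend on uniform boundedness of $A_L$ in $L$ and on a sufficiently strong mode of convergence of $Z_{n-L-1}/\mu^{n-L-1}$ to $W$, for which the Kesten--Stigum condition \eqref{eq:kestenstigum} (ensuring $W > 0$ $\pstar$-a.s.) is essential.
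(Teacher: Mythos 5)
Your architecture is genuinely different from the paper's: you bypass the cutting--pruning--regularisation scheme and the domain-of-attraction result (Theorem~\ref{thm:mainthm1}) entirely, replacing them by a direct Laplace-functional computation --- condition on the tree, factorise over parent vertices, linearise the logarithm, and run a law of large numbers level by level in the relative generation $L=n-|v|-1$. Your target limit does agree with the paper's \eqref{eq:lap_final_dep_brw}, and your tail-in-$L$ interchange rests on the same estimate ($A_L\le \mu\,\prob(Z_L>0)\,\lambda^{(1)}\{|y|>\delta\}$, geometrically summable against $\mu^{-L}$) that underlies the paper's cutting lemma \eqref{eq:ct2}. However, there is a genuine gap at the central analytic step, namely the claim that \eqref{eq:jtregvar_cn} yields $\mu^n\,\exptn[1-\psi_n^{(v)}]\to A_L$. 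The convergence $\mu^n\prob(c_n^{-1}\bld X\in\cdot)\hlconv\lambda$ only controls integrals of bounded continuous functions on $\rnz$ that vanish on a neighbourhood of $\bld 0_\infty$. Your test functional $\bld y\mapsto \exptn\big[1-\exp(-\sum_{i=1}^{Z_1}Z_L^{(i)}f(y_i))\big]$ is not in that class: a basic neighbourhood of $\bld 0_\infty$ in the product topology on $\real^\bbn$ constrains only finitely many coordinates, while your functional depends on the unboundedly many coordinates $1,\dots,Z_1$ and is strictly positive whenever some late coordinate lands in $\supp f$ with positive probability of a surviving subtree attached. This is precisely the obstruction that the paper's pruning (truncation to $B$ coordinates via $\lambda^{(B)}=\lambda\circ\mathrm{PROJ}_B^{-1}$) and the $(\nu(f)-\epsilon)_+$ smoothing in the proofs of Theorem~\ref{thm:mainthm1} and Lemma~\ref{lemma:treerv} are built to remove; without importing some version of those two approximations, together with a uniform-in-$n$ bound on the truncation error, the step ``applying \eqref{eq:jtregvar_cn} to each factor'' is an assertion rather than a proof.

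Two further, smaller issues. First, $1-\psi_n^{(v)}=O(\mu^{-n})$ is not uniform in $v$: the elementary bound is $1-\psi_n^{(v)}\le Z_1^{(v)}\prob(|X_1|>\delta c_n)=O(Z_1^{(v)}\mu^{-n})$, so your quadratic-error term $\sum_v(1-\psi_n^{(v)})^2$ needs an additional maximal estimate on $\max_{|v|<n}Z_1^{(v)}$ (true under $\exptn Z_1<\infty$, but it must be stated). Second, the per-level ``strong law'' is really a weak law for a triangular array of $[0,1]$-valued summands that are i.i.d.\ only conditionally on $\calf_{n-L-1}$; a conditional Chebyshev bound closes this, but note that the Kesten--Stigum condition is not what drives it --- it is needed only to guarantee $W>0$ on $\cals$ so that the limiting Laplace functional is nondegenerate. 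With the truncation/smoothing machinery restored, your route is viable and arguably more elementary than the paper's, but as written the key limit is exactly where the paper's hard work lives and you have skipped it.
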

Lemma~\ref{lemma:one}  immediately implies the result,  since by retracing the proof of Lemma 3.1  of \citet{bhattacharya:hazra:roy:2014} we can easily show that
\beq \label{eq:olj2}
\limsup_{n \to \infty} \pstar ( \rho(N_n, \tilde{N}_n) > \epsilon) =0, \text{  for every $\epsilon >0$
},
\eeq
where $\rho$ is the vague metric on $\scrmrz$.  In the rest of this section, we concentrate on the proof of Lemma~\ref{lemma:one}. This can be split into three major steps - {\it cutting, pruning, regularisation}. The first two steps are exactly the same as in the proof of the main theorem of \citet{bhattacharya:hazra:roy:2014} and hence we only sketch them in Subsection~\ref{subsec:cut_prune}. The third step, however, is new and forms the key  towards computation of the weak limit of $\tilde{N}_n$. Subsection~\ref{subsec:regu_weak} contains the details of this step, which uses Theorem~\ref{thm:mainthm1} as one its main ingredients.

\subsection{Cutting and Pruning} \label{subsec:cut_prune}
In order to compute the weak limit of $\tilde{N}_n$, we follow the two-step truncation introduced in \citet{bhattacharya:hazra:roy:2014} for the proof of Theorem 2.1 therein. We briefly sketch these two steps in this subsection and recall the corresponding notations from the aforementioned reference. Let us denote by $D_j$ the set of vertices in the $j^{th}$  generation of the tree $\bbt$. \label{not:D_n}  Fix a positive integer $K$, and choose $n$ large enough such that $n>K$ and cut the tree at the $(n-K)^{th}$ generation to keep the forest containing $K$ generations of $|D_{n-K}|$ independent Galton-Watson trees (which we denote by $\{\bbt_j: 1\le j\le |D_{n-K}|\}$). Each vertex $\uv$ in the $n^{th}$ generation of the original tree belongs to the $K^{th}$ generation of some sub-tree $\bbt_j$. So, given a $\uv\in D_n$, there exists an unique geodesic path from the root of $\bbt_j$ to $\uv$. We denote this path by $\Iv^K$.

We prune the forest obtained above. Fix an integer $K>0$ and for each edge $\uu$ in the forest $\cup_{j=1}^{|D_{n-K}|} \bbt_j$, define $A_\uu$ to be the number of descendants of $\uu$ at the $n^{th}$ generation of $\bbt$. Fix another integer $B>1$ large enough so that $\mu_B := \exptn (Z_1^{(B)}) > 1$, where $Z_1^{(B)} := Z_1 \mathbbm{1}(Z_1 \le B) + B \mathbbm{1}(Z_1 > B)$. We modify the forest according to the pruning algorithm introduced in \citet{bhattacharya:hazra:roy:2014} as follows. If the root of $\bbt_1$ has more than $B$ offsprings (edges), then keep the first $B$  of them and remove the others and their descendants. If the number of offsprings of the root is less than or equal to $B$, then do nothing. Repeat this for offsprings of each of the remaining vertices. Continue this up to the offsprings of the $(K-1)^{th}$ generation vertices in $\bbt_1$ to obtain its $B$-pruned version $\bbt^{(B)}_1$. Similarly, apply the same procedure, to get $\bbt^{(B)}_j$, the pruned version of $\bbt_j$ for each $j \leq D_{n-K}$.

Note that under $\prob$, these $|D_{n-K}|$ pruned sub-trees are independent copies of a Galton-Watson tree (up to the $K^{th}$ generation) with a bounded branching random variable $Z_1^{(B)}$. For each edge $\uu$ in $\cup_{j=1}^{|D_{n-K}|} \bbt^{(B)}_j$, we define $\aub$ to be the number of descendants of $\uu$ in the $K^{th}$ generation of the corresponding pruned sub-tree. Observe that for every vertex $\uu$ at the $i^{th}$ generation of any sub-tree $\bbt^{(B)}_j$, $\aub$ is equal in distribution to $Z_{K-i}^{(B)}$, where $\{ Z_i^{(B)} \}_{i \ge 0}$ denotes a branching process with $Z_0^{(B)} \equiv 1$ and branching random variable $Z_1^{(B)}$. For each $i = 1, 2. \ldots, K$, we denote by $D_{n-K+i}^{(B)}$ the union of all $i^{th}$ generation vertices (as well as edges) from the pruned sub-trees $\bbt^{(B)}_j$, $j=1,2,\ldots, |D_{n-K}|$.

 We introduce the following useful point processes:
 \begin{equation} \label{eq:ct1}
\tnk := \sum_{|\uv|=n} \sum_{\uu \in \Iv^K} \delta_{c_n^{-1}\Xu}, \qquad \text{and } \qquad \nkb := \sum_{\uv \in D_n^{(B)}} \sum_{\uu \in \Iv^K} \delta_{b_n^{-1} \Xu},
\end{equation}
where $|\uv|$ denotes the generation of $|\uv|$ in the original tree $\bbt$.
The point processes $\tnk$ and $\nkb$ are not simple point processes since both of them have alternative representations:
\begin{equation} \label{eq:ptf2}
\tnk = \sum_{i=0}^{K-1} \sum_{\uu \in D_{n-i}} A_\uu \delta_{b_n^{-1} \Xu}, \qquad \text{and } \qquad \nkb = \sum_{i=0}^{K-1} \sum_{\uu \in D_{n-i}^{(B)}} \aub \delta_{b_n^{-1} \Xu}.
\end{equation}



The following lemma summarises the reason why the investigation of the weak convergence of $\nkb$ is enough to prove Lemma~\ref{lemma:one}. Since this lemma can be derived by appropriate modifications of the proofs of Lemmas~3.2 and 3.3 of \citet{bhattacharya:hazra:roy:2014}, using the definition of regular variation, we skip its proof in this paper. For details, the readers are referred to \citet{thesis:ayan}.
\begin{lemma}
Under the assumptions of Theorem~\ref{thm:mainthm2} it follows that:
\begin{enumerate}
\item For every $\epsilon >0$
\beq \label{eq:ct2}
\lim_{K \to \infty} \limsup_{n \to \infty} \pstar ( \rho(\tilde{N}_n , \tilde{N}_n^K) > \epsilon) =0.
\eeq
\item For every positive integer $K$ and every $\epsilon > 0$,
\begin{equation} \label{pmr10}
\lim_{B \to \infty} \limsup_{n \to \infty} \pstar \left( \rho ( \tnk, \nkb )>\epsilon\right)=0.
\end{equation}
\end{enumerate}

\end{lemma}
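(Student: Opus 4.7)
The strategy for both parts reduces vague-metric smallness to control of rare ``big jump'' events at scale $c_n$. Since the vague topology on $\scrm(\zrer)$ is generated by the integrals $\nu \mapsto \nu(f)$ for $f \in \ckr$, it suffices to show, for every $f \in \ckr$ supported in $\{|x| \geq \delta\}$ (some $\delta > 0$) and every $\epsilon > 0$, that the differences $(\tilde{N}_n - \tnk)(f)$ and $(\tnk - \nkb)(f)$ converge to $0$ in $\pstar$-probability in the stated iterated limits. The pivotal observation is that $f(c_n^{-1} X_\uu) = 0$ whenever $|X_\uu| \leq \delta c_n$, so any edge whose displacement is small at scale $c_n$ contributes nothing to the difference. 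Combined with the tail estimate $\mu^n \prob(|X_1| > \delta c_n) \to \delta^{-\alpha}\,\nu_\alpha(\{y : |y|>1\})$, which follows from regular variation of $X_1$ and the choice $c_n = b_{[\mu^n]}$, this reduces both claims to first-moment bounds on the number of ``affected edges with a large displacement.''

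For part (1), the representation $\tilde{N}_n - \tnk = \sum_{i=1}^{n-K}\sum_{|\uu|=i} A_\uu\,\delta_{c_n^{-1}X_\uu}$ shows that only edges $\uu$ in generations $1,\dots,n-K$ contribute. Since each $X_\uu$ is independent of the tree structure and marginally distributed as $X_1$, the expected number of such edges with $|X_\uu| > \delta c_n$ is
\[
\sum_{i=1}^{n-K} \mu^i \cdot \prob(|X_1| > \delta c_n) \;\leq\; \frac{\mu^{n-K+1}}{\mu-1}\cdot\prob(|X_1|>\delta c_n) \;=\; O(\mu^{-K})
\]
uniformly in $n$. Markov's inequality together with the bound $\pstar(\cdot) \leq \prob(\cdot)/\prob(\cals)$ then yields $\pstar\bigl((\tilde{N}_n-\tnk)(f)\neq 0\bigr) = O(\mu^{-K})$, which vanishes as $K \to \infty$.

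For part (2), the difference $\tnk - \nkb$ is carried by edges $\uu$ in the unpruned forest $\bigcup_{j=1}^{|D_{n-K}|}\bbt_j$ that are \emph{affected by pruning}: either $\uu$ is directly cut (its parent has more than $B$ offspring and $\uu$ is in the discarded excess), $\uu$ is a forest-descendant of a directly cut vertex, or $\uu$ is a kept edge lying on the ancestral path of a cut descendant. Each directly cut vertex at depth $j+1$ of its sub-tree contributes at most $j$ ancestors together with its forest-subtree of expected size $O(\mu^{K-j})$ to the count of affected vertices, while the expected number of directly cut children at any given parent equals $\exptn[(Z_1-B)_+]$. Summing over parent generations $n-K,\dots,n-1$ gives
\[
\exptn[\#\text{affected edges}] \;\leq\; C(K,\mu)\cdot\mu^n\cdot\exptn[(Z_1-B)_+].
\]
Multiplying by $\prob(|X_1|>\delta c_n) = O(\mu^{-n})$ and using independence of displacements from the cutting pattern, the expected number of affected edges $\uu$ with $|X_\uu|>\delta c_n$ is bounded by $C'(K,\mu,\delta)\cdot\exptn[(Z_1-B)_+]$, uniformly in $n$. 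Since $\exptn Z_1 < \infty$, dominated convergence gives $\exptn[(Z_1-B)_+] \to 0$ as $B \to \infty$, finishing the argument.

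The main technical point is that $\tilde{N}_n$, $\tnk$, and $\nkb$ are not simple point processes: each edge carries a mass $A_\uu$ that can be as large as $Z_n$, so a naive second-moment bound on the test-function integrals would blow up. The argument above bypasses this entirely by aiming for \emph{vanishing} of the difference rather than smallness of its mean: on the complement of the rare event that some affected edge has a big displacement, the difference is identically zero regardless of the sizes of the $A_\uu$. This is exactly a one-big-jump principle at the level of point processes, reflecting the dominance of a single extremal displacement in the heavy-tailed regime.
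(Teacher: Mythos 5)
Your argument is correct and is essentially the intended one: the paper itself omits the proof of this lemma, deferring to ``appropriate modifications'' of Lemmas~3.2 and 3.3 of \citet{bhattacharya:hazra:roy:2014}, and those are precisely the first-moment/one-big-jump bounds you carry out (expected number of edges affected by cutting, resp.\ pruning, that carry a displacement exceeding $\delta c_n$, controlled via $\mu^n \prob(|X_1|>\delta c_n)=O(1)$, Markov's inequality, and the standard reduction of the vague metric to finitely many test functions supported away from $0$). I see no gaps.
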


\subsection{Regularization of the Pruned Forest} \label{subsec:regu_weak}
The study of the weak convergence of $\nkb$ and the identification of the limit is the main technically challenging step, which is carried out through the lemma presented below.

 \begin{lemma} \label{lemma:Limit_Laplace}
Under the assumptions of Theorem~\ref{thm:mainthm2}, for all $K \geq 1$ and for all $B$ large enough so that $\mu_B = \exptn (Z_1^{(B)}) > 1$, there exists point processes $N_*^{(K,B)}$ and $N_*^{(K)}$ such that, under $ \pstar$,
\begin{itemize}
\item[(a)]  $\nkb\Rightarrow N_*^{(K,B)}$ as $n\to \infty$,
\item[(b)]$N_*^{(K,B)}\Rightarrow N_*^{(K)}$  as $B\to\infty$,  
\item[(c)] $N_*^{(K)}\Rightarrow N_*$ as  $K\to\infty$,
\end{itemize}
in the space $\scrm(\bar{\real}_0)$ equipped with the vague topology. Furthermore, $N_*$ admits the representation \eqref{eq:rep_limit_measure} and also an SScDPPP representation.
\end{lemma}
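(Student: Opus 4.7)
\textbf{Proof plan for Lemma~\ref{lemma:Limit_Laplace}.} My plan is to establish the three convergences in order, using Theorem~\ref{thm:mainthm1} as the workhorse in part~(a) and softer continuity/identification arguments for parts~(b) and~(c); the explicit representation~\eqref{eq:rep_limit_measure} and the SScDPPP property will fall out of an explicit description of the scale-decoration produced in part~(a).

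For part~(a), I condition on $\calf_{n-K}$. The pruned forest then consists of $|D_{n-K}|$ i.i.d.\ depth-$K$ Galton--Watson trees $\bbt^{(B)}_j$ with bounded offspring $Z_1^{(B)}$, each carrying an independent jointly regularly varying displacement family. Writing
$$\pointl^{(K,B)}\;:=\;\sum_{i=0}^{K-1}\sum_{\uu\in D_i(\bbt^{(B)}_1)} A_\uu^{(B)}\,\delta_{X_\uu}$$
for the point process produced by a single sub-tree, one has, on $\calf_{n-K}$, that $\nkb\eqd\sum_{j=1}^{|D_{n-K}|}\mbfs_{c_n^{-1}}\pointl^{(K,B)}_j$ with i.i.d.\ copies $\pointl^{(K,B)}_j$. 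The first task is to show that $\pointl^{(K,B)}\in\regvar(\scrmrz,\alpha,m^*_{K,B})$ for an explicit measure $m^*_{K,B}$. This follows by combining~\eqref{eq:jtregvar} with the HL-mapping theorem of \citet{lindskog:resnick:roy:2014}: because the tree structure and the multiplicities $A_\uu^{(B)}$ are independent of the displacements, only one displacement can be large at a time, and the limit measure $m^*_{K,B}$ is concentrated on point measures of the form $a\,\delta_x$, where $x$ is drawn along $\lambda$ at a uniformly chosen edge $\uu$ of $\bbt^{(B)}_1$ and $a=A_\uu^{(B)}$ is the independent descendant count at that edge. The martingale convergence $|D_{n-K}|/\mu^{n-K}\to W$ under $\pstar$ together with the scaling relation $b_{[tn]}/b_n\to t^{1/\alpha}$ allows me to trade the random index $|D_{n-K}|$ for a deterministic one at the cost of an independent random scale $(W\mu^{-K})^{1/\alpha}$. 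Applying Theorem~\ref{thm:mainthm1} to the deterministic superposition then gives
$$\nkb\;\Rightarrow\;N_*^{(K,B)}\;\eqd\;\mbfs_{(W\mu^{-K})^{1/\alpha}}\,\pointq^{(K,B)},$$
where $\pointq^{(K,B)}$ is the $\stas$ point process whose Laplace functional is determined by $m^*_{K,B}$ through Theorem~\ref{thm:mainthm1} and is independent of $W$.

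For part~(b), the $L^1$-convergence $Z_1^{(B)}\to Z_1$ guaranteed by the Kesten--Stigum condition~\eqref{eq:kestenstigum} propagates through the finite-depth tree and yields $\pointl^{(K,B)}\Rightarrow\pointl^{(K)}$ in the vague topology; continuity of the Laplace-functional formula in Theorem~\ref{thm:mainthm1} with respect to the controlling measure then produces $N_*^{(K,B)}\Rightarrow N_*^{(K)}$. For part~(c), as $K$ grows the descendant-count data $(A_\uu)_\uu$ entering $m^*_K$ converges, by the Galton--Watson branching law, to the distribution $(V,\bld T)$ specified in~\eqref{eq:pmf_V1}--\eqref{eq:cond_pmf_T1}: $V$ arises as the size-biased offspring number conditioned on eventual survival of at least one lineage, and the $T_{lk}$'s are the descendant counts in the surviving lines. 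Unpacking $\pointq^{(K)}$ into its scale-decorated Poisson form via~\eqref{item:DM1} of Corollary~\ref{propn:strict_stable} with Poisson intensity $\lambda$ on $\rnz$, and feeding in the random scale from part~(a), the limit reads
$$N_*\;\eqd\;\sum_{l=1}^{\infty}\sum_{k=1}^{V_l}T_{lk}\,\delta_{(s\mu^{-1}W)^{1/\alpha}\xi_{lk}},$$
where the factor $s\mu^{-1}$ is precisely the normaliser coming out of~\eqref{eq:pmf_V1}. Since each $N_*^{(K,B)}$ is a randomly scaled $\stas$ point process and the class of SScDPPPs is closed under the weak limits taken in (b) and (c) by Proposition~\ref{thm:sub_zet}, the limit $N_*$ is itself SScDPPP.

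The hard part is part~(a): verifying HL-regular variation of $\pointl^{(K,B)}$ on $\scrmrz$ and pinning down $m^*_{K,B}$ concretely enough that the scale-decoration is visible. The map that sends the jointly regularly varying displacement vector to the induced point measure is not scaling-equivariant in a Euclidean sense, so I must rely on the general HL-mapping theorem and cleanly separate the (displacement-independent) branching and multiplicity data from the displacement data. A secondary delicate point is the combinatorial identification of the distribution of $(V_l,\bld T_l)$ during the $K\to\infty$ passage, which is what forces the size-biased form in~\eqref{eq:pmf_V1}--\eqref{eq:cond_pmf_T1}.
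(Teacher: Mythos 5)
Your overall architecture matches the paper's: condition on $\calf_{n-K}$, exhibit $\nkb$ as a superposition of $|D_{n-K}|$ i.i.d.\ single-subtree point processes, prove that a single subtree's point process is regularly varying on $\scrm_0$ (this is the paper's Lemma~\ref{lemma:treerv}, proved after a ``regularization'' of the pruned trees into exact $B$-ary trees), feed the result into the machinery of Theorem~\ref{thm:mainthm1} together with $Z_{n-K}/\mu^{n-K}\to W$, and then pass $B\to\infty$ and $K\to\infty$ at the level of Laplace functionals, identifying explicit Cox-cluster processes at each stage and reading off the SScDPPP property from the form $\estar[\fralpha(\,\cdot\,)]$ of the scaled Laplace functional via Proposition~\ref{thm:sub_zet}.

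There is, however, a genuine error in the step you yourself flag as the hard part. You assert that, because the multiplicities $A_\uu^{(B)}$ are independent of the displacements, ``only one displacement can be large at a time,'' so that $m^*_{K,B}$ concentrates on single-atom point measures $a\,\delta_x$. This is false under Assumption~\ref{assump:model}: the displacements of siblings (children of one parent) form a single jointly regularly varying vector $\bld X$ whose limit measure $\lambda$ need \emph{not} concentrate on the axes (that is precisely the generalization beyond \citet{bhattacharya:hazra:roy:2014}), so several siblings can be large simultaneously. The one-big-jump principle here operates at the level of sibling groups, not individual edges: exactly one parent's offspring vector survives the scaling, and the resulting cluster is $\sum_{m\in A} Z_i^{(m,B)}\delta_{x_m}$ with $(x_1,\dots,x_B)$ governed by $\lambda^{(B)}$ and $A$ the random set of children with surviving descendant lines --- this is exactly the measure $\Upsilon$ in \eqref{eq:apll10}. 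Your claimed form of $m^*_{K,B}$ would collapse the limit to the i.i.d.\ case ($\lambda=\lamiid$) and is inconsistent with the multi-point decoration $\sum_{k=1}^{V_l}T_{lk}\delta_{\xi_{lk}}$ in your own final formula, which you therefore assert rather than derive. Two smaller points: $Z_1^{(B)}\to Z_1$ in $L^1$ follows from $\exptn(Z_1)<\infty$ alone, not from Kesten--Stigum (which is used only to get $W>0$ on survival); and the closure of the SScDPPP class under the weak limits in (b) and (c) is not something Proposition~\ref{thm:sub_zet} hands you for free --- the paper instead verifies directly that the scaled Laplace functional of the limit has the form \eqref{eq:SLF}.
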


\vspace{.5cm}

\begin{center}

\begin{tikzpicture}


\draw[dashed] (-5.5,-2) -- (5,-2);

 \draw[ ->] (-2,-2) --  (-5,-4) ;

 \draw[ ->] (2,-2) --  (2.5,-4);
 \draw[ ->] (2,-2) --  (5,-4);


 \draw[ ->] (-5,-4) -- (-5.5,-6);
 \draw[ ->] (-5,-4) -- (-4.5,-6);

 \draw[ ->] (2.5,-4) -- (2.5,-6);

 \draw[ ->] (5,-4) -- (5,-6);
 \draw[ ->] (5,-4) -- (5.5,-6);

 \draw (0,-6.5) node[anchor=north]{ \small{Figure 1  : Afer cutting and pruning with $K=2$ and $B=2$.}};

 \end{tikzpicture}

\end{center}

We shall use an idea of regularisation to derive the lemma. After pruning the trees $\{\bbt_j^{(B)} : j \ge 1\}$, we shall make them a bunch of regular subtrees following the algorithm: (see Figure 1)

\begin{enumerate}

\item[R1.] Fix a subtree $\bbt_1^{(B)}$ and look at its root.

\item[R2.] The root can have at most $B$ children. If it has exactly $B$  children, then do nothing. Otherwise, if it has $m < B$ children, then add $B-m$ new vertices. Define $\aub :=0$ if $\uu$ is a newly added vertex.

\item[R3.] Now we have exactly $B$  particles at the first generation of the subtree $\bbt_1^{(B)}$ and the next step is to replace their displacements by an independent copy of $(X_1, X_2, \ldots, X_B)$.

\item[R4.] Follow the steps R2 and R3 for each of the $B$-members of the first generation and continue this up to the $K^{th}$ generation.

\item[R5.] Repeat the steps R1, R2, and R3 for each of the other subtrees.

\end{enumerate}

\noindent See Figure~2  below for the regularized versions of the pruned subtrees (as in Figure~1 above). The newly added edges are the dotted ones.

\vspace{.75cm}

\begin{center}

\begin{tikzpicture}


\draw[dashed] (-5.5,-2) -- (5,-2);

 \draw[ ->] (-2,-2) --  (-5,-4) ;
 \draw[dotted, ->] (-2,-2) -- (-1,-4);

 \draw[ ->] (2,-2) --  (2.5,-4);
 \draw[ ->] (2,-2) --  (5,-4);


 \draw[ ->] (-5,-4) -- (-5.5,-6);
 \draw[ ->] (-5,-4) -- (-4.5,-6);

\draw[dotted,->] (-1,-4) -- (0,-6);
\draw[dotted,->] (-1,-4) -- (-2,-6);

 \draw[ ->] (2.5,-4) -- (4,-6);
 \draw[dotted,->] (2.5,-4) -- (1.5,-6);

 \draw[ ->] (5,-4) -- (5,-6);
 \draw[ ->] (5,-4) -- (5.5,-6);

 \draw (0,-6.5) node[anchor=north]{ \small{Figure 2  : After regularisation with {K=2} and $B=2$.}};

 \end{tikzpicture}

\end{center}

It is important to note that the displacements corresponding to the subtrees are changed but have the same distribution. After modification, the modified displacement corresponding to the vertex $\uu$ will be denoted by $\Xu'$. So, we have a new point process
\aln{
N_n'^{ (K,B)} := \sum_{i=0}^{K-1} \sum_{\uu \in D_{n-i}^{(B)}} \aub \delta_{b_n^\inv X'_\uu},
}
 which has the same distribution as $\nkb$. Here we shall use the idea that the point process corresponding to the subtrees are independently and identically distributed and that $\dnkb$ is the superposition of the point processes corresponding to the subtrees. We shall show that the point process corresponding to a fixed subtree is regularly varying in the space $\scrmrz$.

After employing the regularization algorithm, we denote the modified trees by $\{\tilde{\bbt}_j^{(B)} : j \ge 1 \}$. We denote   $l^{th}$ vertex at the $i^{th}$ generation of the $j^{th}$ subtree by the triplet $(j,i,l)$. Then we observe that
\beq \label{eq:polf1}
N_n'^{ (K,B,j)}=\sum_{i=1}^K \sum_{l=1}^{B^i} A_{(j,i,l)}^{(B)} \delta_{ X'_{(j,i,l)}} \qquad\text{and }\qquad \dnkb = \sum_{j=1}^{|D_{n-K}|} N_n'^{ (K,B,j)}.
\eeq
 The first step is to show that the point process $\dnkbo$  is regularly varying in the space $\scrmrz$. The following lemma is the backbone of the proof of Lemma~\ref{lemma:Limit_Laplace}.
 \begin{lemma}\label{lemma:treerv}
$\dnkb$ is the superposition of $|D_{n-K}|$ independent and identically distributed copies of the point process $\dnkbo$ (each of which is independent of $|D_{n-K}|$) and there exists a non-null measure $\Upsilon$ on $\scrmrz$ such that
\aln{
 \Upsilon_n:=\mu^n \prob(\mbfs_{c_n^\inv} \dnkbo \in \cdot) \hlconv \Upsilon,
 \label{eq:regvar_subtree}
}
where $\Upsilon(B_r) < \infty$ for every $r >0$ with
\aln{
B_r = \{ \nu \in \scrmrz: \rho(\nu, \nullm) >r\}. \label{eq:defn_Br}
}
\end{lemma}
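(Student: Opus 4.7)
My plan is to prove the two assertions in the lemma separately. The independence and identical distribution of $N_n'^{(K,B,1)}, \ldots, N_n'^{(K,B,|D_{n-K}|)}$, and their joint independence of $|D_{n-K}|$, follow by direct inspection of the construction: cutting at generation $n-K$ produces, conditional on $\calf_{n-K}$, i.i.d.\ Galton--Watson trees of depth $K$, while pruning and regularization act locally on each subtree and use fresh independent copies of $(X_1,\ldots,X_B)$ at every internal vertex. Since $|D_{n-K}|$ is $\calf_{n-K}$-measurable and all subsequent objects are independent of $\calf_{n-K}$, the desired superposition structure drops out.

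For the regular variation assertion, I plan to reduce to a finite-dimensional statement via a continuous mapping argument. After regularization, $\tilde{\bbt}_1^{(B)}$ has a deterministic topology --- a complete $B$-ary tree of depth $K$ --- with exactly $d:=B+B^2+\cdots+B^K$ edges $e_1,\ldots,e_d$. The displacement vector $\bld{X}':=(X'_{e_1},\ldots,X'_{e_d})\in\real^d$ is then the concatenation of $1+B+\cdots+B^{K-1}$ independent copies of the jointly regularly varying vector $(X_1,\ldots,X_B)$, so standard results on products of independent regularly varying measures (combined with \eqref{eq:jtregvar_cn}) give
\[
\mu^n \prob(c_n^\inv \bld{X}' \in \cdot) \to \lambda^* := \sum_{p}\iota_p(\lambda_B)\quad\text{in }\bbm(\real^d\setminus\{\bld{0}\}),
\]
where the sum ranges over internal vertices $p$ of the regularized subtree, $\lambda_B$ is the limit measure of the joint regular variation of $(X_1,\ldots,X_B)$ (obtained from $\lambda$ by marginalization onto the first $B$ coordinates), and $\iota_p$ embeds $\real^B$ into the $B$ coordinates of $\real^d$ indexed by the children of $p$, with the remaining coordinates set to zero. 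For a fixed $\bld{a}=(a_1,\ldots,a_d)\in\{0,1,\ldots,B^K\}^d$, define $\Phi_{\bld{a}}:\real^d \to \scrm(\bar{\real}_0)$ by $\Phi_{\bld{a}}(\bld{x})=\sum_{k}a_k\delta_{x_k}$. This map is continuous to the vague topology (atoms at coordinates approaching $0$ vanish because $0\notin\bar{\real}_0$) and commutes with scalar multiplication. Since $\bld{A}:=(A^{(B)}_{e_1},\ldots,A^{(B)}_{e_d})$ is independent of $\bld{X}'$ and takes finitely many values (each $A^{(B)}_{e_k}\le B^K$), conditioning on $\bld{A}$ and applying the HL continuous mapping theorem (Theorem~3.1 of \citet{lindskog:resnick:roy:2014}) to each configuration yields
\[
\Upsilon_n = \sum_{\bld{a}}\prob(\bld{A}=\bld{a})\,\mu^n\prob\big(\mbfs_{c_n^\inv}\Phi_{\bld{a}}(\bld{X}')\in\cdot\big) \hlconv \sum_{\bld{a}}\prob(\bld{A}=\bld{a})\,\Phi_{\bld{a}}(\lambda^*)=:\Upsilon,
\]
which is the desired regular variation. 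Non-nullness of $\Upsilon$ follows from non-nullness of $\lambda$ together with the existence of an edge $\uu$ with $\prob(A^{(B)}_\uu>0)>0$, and finiteness $\Upsilon(B_r)<\infty$ follows from the finiteness of $\lambda^*$ on sets bounded away from $\bld{0}\in\real^d$ together with the boundedness of the multiplicities by $B^K$.

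The main technical obstacle is verifying continuity of $\Phi_{\bld{a}}$ as a map into $\scrm(\bar{\real}_0)$ with the vague topology: if a coordinate $x_{e_k}\to 0$ then the atom $a_{e_k}\delta_{x_{e_k}}$ must escape every compact subset of $\bar{\real}_0$ and disappear vaguely, and this must be checked uniformly against all test functions in $\ckr$. A secondary subtlety is the identification of the product limit measure $\lambda^*=\sum_p \iota_p(\lambda_B)$; this rests on the familiar "one-coordinate-at-a-time" principle for products of independent regularly varying vectors, but requires careful reconciliation with the specific form of $\lambda$ encoded in Assumptions~\ref{assump:model} and \eqref{eq:jtregvar_cn}.
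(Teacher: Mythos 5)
Your proposal is correct, and it reaches the same limit measure as the paper via a genuinely different organization of the argument. The common core is identical: both proofs rest on the ``one big block'' regular variation of the concatenated displacement vector of the regularized subtree --- your $\lambda^*=\sum_p\iota_p(\lambda_B)$ is exactly the measure $\tau$ of \eqref{eq:polf8} --- together with the independence of the multiplicities $\bld{A}$ from the displacements and the uniform bound $A^{(B)}_{\uu}\le B^K$. Where you diverge is in how the limit is extracted: the paper fixes the test functionals $F_{g_1,g_2,\eps_1,\eps_2}$ demanded by Theorem~A.2 of \citet{hult:samorodnitsky:2010}, integrates against the product law of $(\tilde A_1,\tilde X_1)$, and then carries out an explicit Galton--Watson expectation computation (\eqref{eq:apll5}--\eqref{eq:apll6}) that reorganizes the sum over multiplicity configurations into the conditioned variables $\tilde U_1^{(B)},\tilde Z_i^{(m,B)}$ and yields the closed form \eqref{eq:apll10} for $\Upsilon$; you instead condition on the finitely many values of $\bld{A}$ and push the finite-dimensional regular variation forward through the continuous, scaling-equivariant maps $\Phi_{\bld a}$ (the relevant mapping theorem in \citet{lindskog:resnick:roy:2014} is their Theorem~2.3 rather than 3.1, which is the scaling property; you do need, and correctly identify, both the continuity of $\Phi_{\bld a}$ and the fact that $\Phi_{\bld a}^{-1}(B_r)$ is bounded away from $\bld 0$, the latter using the uniform bound on multiplicities). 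Your route is cleaner for the lemma as stated --- in particular $\Upsilon(B_r)<\infty$ becomes essentially automatic from $\lambda^*(\Phi_{\bld a}^{-1}(B_r))<\infty$ --- but it leaves $\Upsilon$ in the abstract form $\sum_{\bld a}\prob(\bld A=\bld a)\,\lambda^*\circ\Phi_{\bld a}^{-1}$, whereas the paper's explicit representation \eqref{eq:apll10} is precisely what is used downstream in the proof of Lemma~\ref{lemma:Limit_Laplace} to match the Laplace functional of $N_*^{(K,B)}$; with your approach that combinatorial identification would simply be deferred to that later step.
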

\subsection{Regular Variation of $\dnkbo$: Proof of Lemma \ref{lemma:treerv}}
We start with some preliminary notations and important observations. Define
\beq \label{eq:polf2}
\tilde{A}_j=(A_{(j,1,1)}^{(B)}, \ldots , A_{(j,1,B)}^{(B)},\ldots ,A_{(j,K,1)}^{(B)}, \ldots , A_{(j,K,B^K)}^{(B)})
\eeq
and
\beq \label{eq:polf3}
\tilde{X}_j =(X'_{(j,1,1)}, \ldots, X'_{(j,1,B)}, \ldots , X'_{(j,K,1)}, \ldots , X'_{(j,K,B^K)}).
\eeq
Here, $\tilde{A}_j$ denotes the collection $\{\aub\}$ for the $j$-th regularized tree, which  is an element of $\tilde{S}_B$ with common law $G(\cdot)$, where
\beq \label{eq:polf4}
\tilde{S}_B =  \underbrace{S_{B^{K-1}} \times \ldots \times  S_{B^{K-1}}}_{B~ many} \times \ldots \times \underbrace{S_B \times \ldots \times S_B}_{B^K ~ many}
\eeq
and $S_p =\{0,1, \ldots, p\}$, while $\tilde{X}_j$ denotes the collection $\{\Xu'\}$ for the $j$-th regularized tree and is an element of
\beq \label{eq:polf5}
\tilde{R}_B=\underbrace{ \real \times \ldots \times \real}_{B ~ many} \times \ldots \times \underbrace{\real \times \ldots \times \real}_{B^K ~ many}.
\eeq
By  construction $\{\tilde{X}_j : j \ge 1\}$ is an i.i.d. collection of $\tilde{R}_B$-valued random elements and also independent of the collection $\{\tilde{A}_j : j \ge 1\}$, which are also i.i.d. themselves. It is important to note that the  convergence in \eqref{eq:jtregvar_cn} implies
\aln{
\mu^n \prob( c_n^\inv (X_1, \ldots, X_B) \in \cdot) \hlconv \lambda^{(B)}(\cdot) \label{eq:polf6}
}
on the space $\real^{B} \setminus \{\mathbf{0}_B\}$, \label{not:zero_b} where $\mathbf{0}_B = (0, 0, \ldots, 0) \in \real^B$ and $\lambda^{(B)} = \lambda \circ \mbox{PROJ}_B^\inv $, with $\mbox{PROJ}_B$  an operator on $\real^\bbn$ such that $\mbox{PROJ}_B ((u_i)_{i=1}^\infty) = (u_1, \ldots, u_B)$ ( Theorem~4.1 of \citet{lindskog:resnick:roy:2014}). Using \eqref{eq:polf6}, it is easy to see that
\aln{
|D_{n-K}| \prob( c_n^\inv \tilde{X}_1 \in \cdot) = \frac{1}{\mu^K} \frac{Z_{n-K}}{\mu^{n-K}} \mu^n \prob(c_n^\inv \tilde{X}_1 \in \cdot) \hlconv \frac{1}{\mu^K} W \tau(\cdot) \label{eq:polf7}
}
$\prob$-almost surely on $\tilde{R}_B \setminus \{\mathbf{0}\}$, where $\mathbf{0} \in \tilde{R}_B$ with all its components $0$ and
\aln{
\tau := \sum_{i=1}^K \sum_{l \in J_i} \tau_{i,l} := \sum_{i=1}^K \sum_{l \in J_i} \bigotimes_{t=1}^{B+B^2 + \ldots + B^{i-1} + l-1} \delta_0 \otimes \lambda^{(B)} \otimes \bigotimes_{t= B+ \ldots + B^{i-1} +B}^{B+B^2 + \ldots + B^K} \delta_0, \label{eq:polf8}
}
$W$ is the martingale limit associated to the branching process (see \eqref{eq:martngle_conv}) and $J_i = \{ p \in \{ 1, \ldots, B^i\} : p \equiv 1 \mbox{ mod } B\}$. Now combining the above result with the fact that $\tilde{A}_1 $ and $\tilde{X}_1$ are independent, we get
\aln{
|D_{n-K}| \prob( \tilde{A}_1 \in \cdot, c_n^\inv \tilde{X}_1 \in \cdot) \hlconv \frac{1}{\mu^K} W \otimes G(\cdot) \otimes \tau(\cdot) \label{eq:polf9}
}
$\prob$-almost surely on $\tilde{S}_B \times (\tilde{R}_B \setminus \{ \mathbf{0}\})$.

In order to show that $\Upsilon_n\hlconv \Upsilon$, we shall use \citet[Theorem A.2]{hult:samorodnitsky:2010}. Fix $g_1, g_2\in \ckr$ (with $\mathrm{support}(g_i) \subseteq\{x : |x| > \eta_i\}$ for $i=1,2$) and $\epsilon_1,\epsilon_2>0$, and define a map
$F_{g_1,g_2,\eps_1,\eps_2}: \scrm(\bar{\real}_0)\to [0,\infty)$ by
\begin{equation*}
F_{g_1,g_2,\eps_1,\eps_2}(\nu)=\left( 1-\exp(-(\nu(g_1)-\epsilon_1)_{+})\right)\left( 1-\exp(-(\nu(g_2)-\epsilon_2)_{+})\right).
\end{equation*}
By the aforementioned result, to establish $\Upsilon_n\hlconv \Upsilon$, we have to verify that
$$\Upsilon_n( F_{g_1,g_2,\eps_1,\eps_2})\to \Upsilon(F_{g_1,g_2,\eps_1,\eps_2})$$
 as $n \to \infty$. From the earlier observations we have that,
 \begin{align} \label{eq:apll1}
 \Upsilon_n( F_{g_1,g_2,\eps_1,\eps_2})&=  \sum_{\tilde{a} \in \tilde{S}_B} \int_{\tilde{R}_B} \bigg[ \bigg( 1 - \exp \bigg\{ - \Big( \sum_{i=1}^K \sum_{l=1}^{B^i} a_{i,l} g_1(c_n^\inv x_{i,l}) - \epsilon_1 \Big)_+\bigg\} \bigg) \nonumber\\
 &\bigg( 1 - \exp \bigg\{ - \Big( \sum_{i=1}^K \sum_{l=1}^{B^i} a_{i,l} g_2(c_n^\inv x_{i,l}) - \epsilon_2 \Big)_+\bigg\}\bigg) \bigg] \mu^n \prob(c_n^\inv \tilde{X}_1 \in \dtv \tilde{x}) G(\tilde{a})
 \end{align}
as $n \to \infty$.
Now consider the function $h(., \tilde a): \tilde{R}_B \to \real_+$ such that
\beq \label{eq:apll2}
h(\tilde{x}, \tilde a) = \bigg( 1 - \exp \bigg\{ - \Big( \sum_{i=1}^K \sum_{l=1}^{B^i} a_{i,l} g_1(c_n^\inv x_{i,l}) - \epsilon_1 \Big)_+\bigg\} \bigg) \bigg( 1 - \exp \bigg\{ - \Big( \sum_{i=1}^K \sum_{l=1}^{B^i} a_{i,l} g_2(c_n^\inv x_{i,l}) - \epsilon_2 \Big)_+\bigg\}\bigg).
\eeq
It is easy to see that the integrand in \eqref{eq:apll1} is a bounded and continuous function on $\tilde{R}^B$ that vanishes in a neighbourhood of $\bld{0} \in \tilde{R}_B$. Using the convergence stated in \eqref{eq:polf9} and \eqref{eq:polf8}, we get that the right-hand side of  \eqref{eq:apll1} converges to
\begin{align} \label{eq:apll3}
& \sum_{\tilde{a} \in \tilde{S}_B} \int_{\tilde{R}_B}h(\tilde x,\tilde a)  \tau(\dtv \tilde{x}) G(\tilde{a})
= \sum_{\tilde{a} \in \tilde{S}_B} \sum_{i=1}^K\sum_{l\in J_i} \int_{\tilde{R}_B} h(\tilde x, \tilde a)\tau_{i,l}(\dtv \tilde{x}) G(\tilde{a})  \nonumber \\
&= \sum_{\tilde{a} \in \tilde{S}_B} \sum_{i=1}^K\sum_{l\in J_i} \int_{\real^B} \bigg[ \bigg( 1 - \exp \bigg\{ - \Big( \sum_{t=l}^{l+B-1} a_{i,t} g_1( x_{t-l+1}) - \epsilon_1 \Big)_+\bigg\} \bigg) \nonumber \\
& \hspace{1cm} \bigg( 1 - \exp \bigg\{ - \Big(  \sum_{t=l}^{l+B-1} a_{i,t} g_2( x_{t-l+1}) - \epsilon_2 \Big)_+\bigg\}\bigg) \bigg] \lambda^{(B)}(\dtv \tilde{x}) G(\tilde{a})
\end{align}
as $n \to \infty$ where $\tau(\cdot)$ is described in \eqref{eq:polf8}.

To compute the above integral and give a probabilistic interpretation of the integral, let us define a collection of independent random variables as follows
\begin{itemize}
\item $\{Y_i^{(B)} : 1 \le i \le K\}$ is a collection of independent random variables such that $Y_i^{(B)} \eqd Z_i^{(B)}$ for every $1 \le i \le K$.
\item For each $1\le i\le K$,  $\{Z_i^{(m,B)} : 1 \le m \le B \}$ is a collection of independent copies of $Z_i^{(B)}$.
\item $\{U_j^{(B)} : j \ge 1\}$ is such that $U_j^{(B)}$ are independent copies of random variable $Z_1^{(B)}$.
\end{itemize}


Let $\tilde{U}_j^{(B)}$ denote the random variable $U_j^{(B)}$ conditioned to stay positive for $j \ge 1$ and $\tilde{Z}_{i}^{(m,B)}$ denotes the random variable $\tilde{Z}_i^{(m,B)}$ conditioned to stay positive for every $0 \le i \le (K-1) $ and $1 \le m \le B$.

First fix a generation $1\le i\le K$. In order to compute the expectation of the exponent in \eqref{eq:apll3} with respect to the law $G(\cdot)$, we need to consider only those members of the $i^{th}$ generation that have at least one descendant at the $K^{th}$ generation of $\tilde{\bbt}_1^{(B)}$.
So we start with $Y_{i-1}^{(B)}$ particles at the $(i-1)^{th}$ generation (potential parents of particles at the $i^{th}$ generation). Each of the members of the $(i-1)^{th}$ generation has a random number of children distributed as $Z_1^{(B)}$ being independent of others. We consider those particles of the $(i-1)^{th}$ generation that have at least one child at the $i^{th}$ generation.  Say the $j^{th}$ member of the $(i-1)^{th}$ generation has $U_j^{(B)}$ children at the $i^{th}$ generation such that $U_j^{(B)} \ge 1$. Among these children only those will be considered who have at least one descendant at the $K^{th}$ generation. This means that we choose a subset of children such that each child has at least one descendant at the  $K^{th}$ generation. So, the random number of children corresponding to each of the chosen particles at the $(i-1)^{th}$ generation has the same distribution as $Z_1^{(B)}$ conditioned to stay positive, being independent of others. Each of the particles chosen from the $i^{th}$ generation must have the same distribution as that of $Z_{K-i}^{(B)}$ conditioned to stay positive, being independent of others.

 We now introduce some new notations that will be essential for the computation. Let $[p] = \{1, 2, \ldots, p\}$. By $\pow (A)$ we mean the collection of all possible subsets of $A$, including the null-set and the set itself. From the above discussion it is clear that we shall choose a random subset of $[\tilde{U}_j^{(B)}]$ such that the elements of the chosen subset have at least one descendant at the $K^{th}$ generation with law the same as $Z_{K-i}^{(B)}$ conditioned to be positive.

 We shall compute the expectation with respect to $G(\cdot)$ in \eqref{eq:apll3}. To ease the presentation, we define $\eff_B(A)= \prob(U_j^{(B)} >0 ) \prob \Big( Z_{K-i}^{(B)} > 0 \Big)^{|A|} \Big( \prob(Z_{K-i}^{(B)}=0)  \Big)^{\tilde{U}_j^{(B)} - |A|}$. Then,
\begin{align} \label{eq:apll5}
& \sum_{\tilde{a} \in \tilde{S}_B}\sum_{\stackrel{l\in J_i}{ (a_{i,l}, a_{i,l+1}, \ldots, a_{i,l+B-1}) \neq (0, \ldots,0)}} \int_{\real^B} \bigg[ \bigg( 1 - \exp \bigg\{ - \Big( \sum_{t=l}^{l+B-1} a_{i,t} g_1( x_{t-l+1}) - \epsilon_1 \Big)_+\bigg\} \bigg) \nonumber \\
& \hspace{2cm} \bigg( 1 - \exp \bigg\{ - \Big(  \sum_{t=l}^{l+B-1} a_{i,t} g_2( x_{t-l+1}) - \epsilon_2 \Big)_+\bigg\}\bigg) \bigg]  G(\tilde{a}) \nonumber \\
&= \exptn \bigg[ \sum_{j=1}^{Y_i^{(B)}} \sum_{A \in \pow([\tilde{U}_j^{(B)}]) \setminus \{\nullm\}} \bigg( 1- \exp \bigg\{ - \Big(\sum_{m \in A} \tilde{Z}_{K-i}^{(m,B)} g_1(x_m) - \epsilon_1 \Big)_+ \bigg\} \bigg) \nonumber \\
& \hspace{2cm} \bigg( 1- \exp \bigg\{ - \Big(\sum_{m \in A} \tilde{Z}_{K-i}^{(m,B)} g_2(x_m) - \epsilon_2 \Big)_+ \bigg\} \bigg) \eff_B(A)\bigg]\nonumber\\
& = \mu_B^{i-1}  \exptn \bigg[  \sum_{A \in \pow([\tilde{U}_1^{(B)}])\setminus \{\nullm\}} \bigg( 1- \exp \bigg\{ - \Big(\sum_{m \in A} \tilde{Z}_{K-i}^{(m,B)} g_1(x_m) - \epsilon_1 \Big)_+ \bigg\} \bigg) \nonumber \\
& \hspace{2cm} \bigg( 1- \exp \bigg\{ - \Big(\sum_{m \in A} \tilde{Z}_{K-i}^{(m,B)} g_2(x_m) - \epsilon_2 \Big)_+ \bigg\} \bigg) \eff_B(A)\bigg].
\end{align}
Hence the right hand side of \eqref{eq:apll3} becomes,
\begin{align} \label{eq:apll6}
& \sum_{i=1}^K  \mu_B^{i-1} \int_{\real^B}  \exptn \bigg[  \sum_{A \in \pow([\tilde{U}_1^{(B)}]) \setminus \{\nullm\}} \bigg( 1- \exp \bigg\{ - \Big(\sum_{m \in A} \tilde{Z}_{K-i}^{(m,B)} g_1(x_m) - \epsilon_1 \Big)_+ \bigg\} \bigg) \nonumber \\
& \hspace{2cm} \bigg( 1- \exp \bigg\{ - \Big(\sum_{m \in A} \tilde{Z}_{K-i}^{(m,B)} g_2(x_m) - \epsilon_2 \Big)_+ \bigg\} \bigg) \eff_B(A) \bigg] \lambda^{(B)}(\dtv \bld{x}) \nonumber \\
& = \sum_{i=0}^{K-1}  \mu_B^{K-i-1} \int_{\real^B}  \exptn \bigg[  \sum_{A \in \pow([\tilde{U}_1^{(B)}]) \setminus \{\nullm\}} \bigg( 1- \exp \bigg\{ - \Big(\sum_{m \in A} \tilde{Z}_{i}^{(m,B)} g_1(x_m) - \epsilon_1 \Big)_+ \bigg\} \bigg) \nonumber \\
& \hspace{2cm} \bigg( 1- \exp \bigg\{ - \Big(\sum_{m \in A} \tilde{Z}_{i}^{(m,B)} g_2(x_m) - \epsilon_2 \Big)_+ \bigg\} \bigg) \nonumber \\
& \hspace{3cm} \prob(U_1^{(B)} >0 ) \prob \Big( Z_i^{(B)} > 0 \Big)^{|A|} \Big( \prob(Z_i^{(B)}=0)  \Big)^{\tilde{U}_1^{(B)} - |A|} \bigg] \lambda^{(B)}(\dtv \bld{x})\nonumber\\
&=\int_{\scrmrz} \bigg( 1- \exp\bigg\{ - \Big(\int g_1 \dtv \nu - \epsilon_1 \Big)_+ \bigg\} \bigg) \bigg( 1- \exp\bigg\{ - \Big(\int g_2 \dtv \nu - \epsilon_2 \Big)_+ \bigg\} \bigg) \Upsilon(\dtv \nu),
\end{align}

 where $\Upsilon$ is a measure  on the space $\scrm_0$ defined as
\begin{align} \label{eq:apll10}
\Upsilon(\cdot)& := \sum_{i=0}^{K-1}  \mu_B^{K-i-1} \exptn \bigg[  \sum_{A \in \pow([\tub_1]) \setminus \{ \nullm \}} \lambda^{(B)} (\bld{x} : \sum_{m \in A} Z_i^{(m,B)} \delta_{x_m} \in \cdot) \nonumber \\
& \hspace{2cm} \prob(\ub_1 >0) \Big( \prob(Z_i^{(B)} >0) \Big)^{|A|} \Big( \prob(Z_i^{(B)} =0)^{\tub_1 - |A| } \Big)\bigg]
\end{align}

It remains to verify that $\Upsilon(B_r) < \infty$ for every $r >0$, where $B_r$ is as in~\eqref{eq:defn_Br}.  Fix an $r >0$. We get
\aln{
\Upsilon(B_r)
&= \sum_{i=0}^{K-1} \mu_B^{K-i-1} \exptn \Big[ \sum_{A \in \pow([t]) \setminus \{\emptyset\}}  \lambda^{(B)} \{\mathbf{x} : \sum_{m \in A} \tilde{Z}_i^{(m,B)} \delta_{x_m} \in B_r\} \prob(U_1^{(B)} >0) \nonumber \\
& \hspace{2cm} (\prob(Z_i^{(B)} >0))^{|A|} (\prob(Z_i^{(B)} =0))^{t-|A|}\Big] \nonumber \\
& \le \sum_{i=0}^{K-1} \mu_B^{K-i-1} \sum_{t=1}^{B} \prob(U_1^{(B)} =t) \sum_{A \in \pow([t]) \setminus \{\emptyset\}}  \lambda^{(B)} \{\mathbf{x} : \sum_{m \in A}  \delta_{x_m} \in B_r\} \nonumber \\
& \hspace{2cm} (\prob(Z_i^{(B)} >0))^{|A|} (\prob(Z_i^{(B)} =0))^{t-|A|}. \label{eq:upsilon_radon_1}
}
Fix $i,t,A$. Then it is easy to see that $\sum_{m \in A} \delta_{x_m} \in B_r$ if there exists some $\eta(i,t,A) >0$ such that, for some $m \in A$, $x_m > \eta(i,t,A)$.  Hence using $\eta := \min_{0 \le i \le K-1,  1 \le t \le B, A \in \pow([t] \setminus \{ \nullm\})} \eta(i,t,A)$ and $\lambda^{(1)}\{x \in \real : |x| > \eta \} = \eta^{-\alpha}$, we have $\Upsilon(B_r)\le B \eta^{-\alpha} \sum_{i=0}^{K-1} \sum_{t=1}^B \prob(U_1^{(B)} =t) (1- \prob(Z_i=0)^t) < \infty$. This completes the proof of Lemma~\ref{lemma:treerv}. \qedhere

\subsection{Establishing the Weak Convergence: Proof of Lemma \ref{lemma:Limit_Laplace}}

In this proof we shall give explicit expressions for $\nskb$ and $N_*^{(K)}$, and establish the weak convergence results assuming that these point processes (and also $N_*$) are Radon. The almost everywhere Radonness of these point processes will be established in Lemma~\ref{lemma:limit:radon}.

To show part (a), we shall compute the limiting Laplace functional of $\dnkb$ under $\pstar$ as $n \to \infty$, i.e., the limit of
\aln{
\estar \Big( \exp \{- \dnkb(f)\} \Big)
}
for a continuous and bounded function $f$ that vanishes in a neighbourhood of $0$. It is easy to see that $\dnkb = \mbfs_{c_n^\inv} \sum_{j=1}^{|D_{n-K}|} N_n^{' (K,B,j)}$,
where $N_n^{' (K,B,j)}$ denotes the point process associated to the $j^{th}$ subtree $\tilde{\bbt}^{(B)}_j$ without scaling, for $j=2, \ldots, |D_{n-K}|$ conditioned on $\calf_{n-K}$.
Now, using the fact that $\dtv \pstar = \frac{1}{\prob(\cals)} \bbo_{ \cals}\dtv \prob$, where $\cals$ denotes the event that the Galton-Watson tree survives, it is easy to see that
\aln{
& \estar \bigg[\exp \bigg\{ - \dnkb(f) \bigg\} \bigg]
= \frac{1}{\prob(\cals)} \exptn \bigg[ \bbo_\cals \exp \bigg\{  -  \mbfs_{c_n^\inv} \sum_{j=1}^{|D_{n-K}|} N_n^{' (K,B,j)} (f) \bigg\} \bigg].  \label{eq:apll15}
}
Following arguments in \citet{bhattacharya:hazra:roy:2014},  it follows that to show the convergence of the right-hand side of \eqref{eq:apll15} it is enough to show the convergence of
\begin{equation}\label{eq:apll16}
\frac{1}{\prob(\cals)} \exptn \bigg[ \bbo_{(|D_{n-K}| >0 )}  \exptn \bigg( \exp \bigg\{  -  \mbfs_{c_n^\inv}\sum_{j=1}^{|D_{n-K}|} N_n^{' (K,B,j)} (f) \bigg\} | \calf_{n-K} \bigg) \bigg].
\end{equation}

 It is important to note that, $\{ N_n^{' (K,B,j)} : j=1, \ldots, |D_{n-K}|\}$ conditioned on $\calf_{n-K}$ are an i.i.d. collection of point processes under the law $\prob$. Hence the conditional expectation in \eqref{eq:apll16} becomes
\begin{equation}\label{eq:apll17}
\prod_{j=1}^{|D_{n-K}|}\exptn \bigg( \exp \bigg\{  -  \mbfs_{c_n^\inv} N_n^{' (K,B,j)} (f) \bigg\} | \calf_{n-K} \bigg)=  \bigg[ \exptn \bigg( \exp \bigg\{  -  \mbfs_{c_n^\inv}\dnkbo (f) \bigg\} \bigg) \bigg]^{\mu^n \frac{Z_{n-K}}{\mu^n}}.
\end{equation}
Combining the result in \eqref{eq:regvar_subtree} and the technique used in proof of Theorem \ref{thm:mainthm1}, it is easy to see that
\aln{
 \bigg[ \exptn \bigg( \exp \bigg\{  -  \mbfs_{c_n^\inv}\dnkbo (f) \bigg\} \bigg) \bigg]^{\mu^n} \to \exp \bigg\{ - \int_{\scrm_0} \Big( 1- \exp\{ \nu(f)\} \Big) \Upsilon(d\nu) \bigg\} \label{eq:apll18}
}
as $n \to \infty$, and  we know from \eqref{eq:martngle_conv} that $\mu^{-n}Z_{n-K}\to \mu^{-K} W$ almost surely as $n\to\infty$.
Finally, using \eqref{eq:apll18}, we get that the almost sure limit of right-hand side of \eqref{eq:apll17} is
\aln{
 \exp \bigg\{ - \frac{1}{\mu^K} W\int_{\scrm_0} \Big( 1- \exp\{ \nu(f)\} \Big) \Upsilon(d\nu) \bigg\} \label{eq:apll20}
}
as $n \to \infty$.
Hence, using dominated convergence theorem and the fact that $\bbo_{|D_{n-K} >0|}$ converges almost surely to $\bbo_\cals$, it follows that
\aln{\estar \bigg[\exp \bigg\{ - \dnkb(f) \bigg\} \bigg] = \estar \bigg[\exp \bigg\{ - \frac{1}{\mu^K} W\int_{\scrm_0} \Big( 1- \exp\{ \nu(f)\} \Big) \Upsilon(d\nu) \bigg\} \bigg]. \label{eq:apll22}}

Next, we shall produce a point process that has the Laplace functional as in \eqref{eq:apll22}. This description is similar to the description of $N_*$. Let
\alns{
\poi_B := \sum_{l=1}^\infty \delta_{\xi_{l,1}, \ldots, \xi_{l,B}} := \sum_{l=1}^\infty \delta_{\pmb{\xi}_l}
}
be a Poisson random measure on $\real^B \setminus \{\pmb{0}_B\}$ with intensity measure $\lambda^{(B)}$ and independent of $W$ (see \eqref{eq:martngle_conv}). Let $V_B$ be an $\{1, \ldots, B\}$-valued random variable with probability mass function
\aln{
\prob ( V_B = t) = \frac{1}{s_B} \prob(Z_1^{(B)} = t) \sum_{i=0}^K \frac{\mu_B^{K-i-1}}{\mu^K} \Big( 1 - (\prob(Z_i^{(B)}=0))^t \Big) \label{eq:pmf_VB}
}
where $s_B$ is the normalizing constant.
Suppose that $\mathbf{T}^{(B)} = (T_1^{(B)}, \ldots, T_B^{(B)})$ is an $\bbn_0^B$-valued random variable with  mass function conditioned on the random variable $V_B$,
\aln{
\prob(\mathbf{T}^{(B)} = \mathbf{y} | V_B= t) = \begin{cases} &0  \mbox{ if } \mathbf{y} = \mathbf{0}_B  \mbox{ or for some } t < k \le B, y_k >0, \\ & \frac{1}{s_t} \sum_{i=0}^{K-1} \frac{\mu_B^{K-i-1}}{\mu^K} \prod_{m=1}^t \prob( Z_i^{(B)} = y_m) \mbox{ otherwise}  \end{cases}  \label{eq:cpmf_TB}
}
where $\mathbf{y} = (y_1, \ldots,y_B) \in \bbn_0^B$, $t \in \{1, \ldots, B\}$ and $s_t$ is again a normalising constant.
Finally, consider the collection $\{(V_l^{(B)}, \mathbf{T}_l^{(B)} ) : l \in \bbn\}$ of  independent copies of $(V_B, \mathbf{T}^{(B)})$ and also independent of $W$ and $\poi_B$. Now consider the following point process
\aln{
N_*^{(K,B)} = \sum_{l=1}^{\infty} \sum_{k=1}^{V_l^{(B)}} T_{l,k} \delta_{(s_BW)^{1/\alpha} \xi_{l,k}} . \label{eq:defn_nskb}
}
 We want to compute the Laplace functional of this point process and verify that it the same as in the expression of \eqref{eq:apll22}.

We shall compute the Laplace functional of $N_*^{(K,B)}$ by computing the Laplace functional of an auxiliary marked Cox process. Define the auxiliary point process as 
\aln{
\mathscr{P}_*^{(K,B)} = \sum_{l=1}^{\infty} \delta_{(V_l^{(B)}, T_{l,1}^{(B)} , \ldots, T_{l,B}^{(B)}, (s_B W)^{1/\alpha} \xi_{l,1}, \ldots,(s_B W)^{1/\alpha} \xi_{l,B})}. \label{eq:defn_pskb}
}
We want to consider a function $f$ on the metric space $(\{1, \ldots, B\} \times \bbn_0^B \times \real^{B} , d_B)$ that is bounded, continuous and vanishes on a neighbourhood of the set $\{1, \ldots, B\} \times \bbn_0^B \times \{\mathbf{0}_B\}$. Then the Laplace functional  will be
\aln{
& \estar \Bigg[ \exp \bigg\{ - \mathscr{P}_*^{(K,B)}(f) \bigg\} \Bigg]
= \estar \Bigg[  \exptn \Bigg(\exp \bigg\{ - \sum_{l=1}^\infty f(V_l^{B}, \mathbf{T}_l^{(B)}, (s_B W)^{1/\alpha} \pmb{\xi}_l) \bigg\} \bigg| W \Bigg)  \Bigg].  \label{eq:lap_nskb1}
}
Use the fact that, conditioned on $W$, $\mathscr{P}_*^{(K,B)}$ is a marked Poisson point process with i.i.d. marks $\{(V_l^{(B)}, \mathbf{T}_l^{(B)}) : l \in \bbn\}$ which are also independent of the Poisson points $\{\pmb{\xi}_l : l \in \bbn\}$. Following Proposition~3.8 of \citet{resnick:1987}, we get that the right-hand side of the above equation equals
\aln{
\estar \Bigg[ \exp \bigg\{ - \int_{\real^B} \exptn \bigg[ 1- \exp \Big\{ - f(V_1^{(B)}, \mathbf{T}_1^{(B)}, (s_B W)^{1/\alpha} \mathbf{x}) \Big\} \bigg] \lambda^{(B)} (\dtv \mathbf{x}) \bigg\} \Bigg]. \label{eq:lap_nskb2}
}
Recall that $\lambda^{(B)}$ satisfies $\lambda^{(B)} (a \cdot) = a^{-\alpha} \lambda^{(B)}(\cdot)$
for every $a>0$. Hence \eqref{eq:lap_nskb2} equals
\aln{
\estar \Bigg[ \exp \bigg\{ - W s_B\int_{\real^B} \exptn \bigg[ 1- \exp \Big\{ - f(V_1^{(B)}, \mathbf{T}_1^{(B)},  \mathbf{x}) \Big\} \bigg] \lambda^{(B)} (\dtv \mathbf{x}) \bigg\} \Bigg]. \label{eq:lap_nskb3}
}
Next we compute the Laplace functional of $N_*^{(K,B)}$. Let $f\in \ckr$ and  choose a function $f' : \{1, \ldots, B\} \times \bbn^B \times \real^B \to \real^+$ such that
\aln{
f'(t, y_1, \ldots, y_B, x_1, \ldots, x_B) = \sum_{m=1}^B y_m f(x_m) \label{eq:lap_nskb4}
}
for every $t \in \bbn$, $y_i \in \bbn_0$ and $x_i \in \real$ for every $i=1,2, \ldots, B$. Then
\aln{
& \estar \Bigg[ \mathscr{P}_*^{(K,B)}(f') \Bigg]  = \estar \Bigg[ \exp \bigg\{ - \sum_{l=1}^\infty \sum_{m=1}^{V_l^{(B)}} T_{l,m} f(\xi_{l,m}) \bigg\} \Bigg]  = \estar \Bigg[ \exp \bigg\{ - N_*^{(K,B)}(f)\bigg\} \Bigg] \nonumber \\
& = \estar \Bigg[ \exp \bigg\{ - s_B W  \int_{\real^B} \exptn \Big( 1- \exp\{ - \sum_{m=1}^{V_1^{(B)}} T_{1,m} f(x_m)\} \Big) \lambda^{(B)} (\dtv \mathbf{x}) \bigg\} \Bigg]. \label{eq:lap_nskb5}
}
We compute the expectation in the exponent, discounting the $0$'s, as follows:
\aln{
& \exptn \Bigg[ 1 - \exp \bigg\{ - \sum_{m=1}^{V_1^{(B)}} T_{1,m} f(x_m) \bigg\} \Bigg]
= \sum_{t=1}^B \prob(V_1^{(B)} =t) \exptn\Bigg[ 1- \exp \bigg\{ - \sum_{m=1}^t T_{1,m} f(x_m) \bigg\}  \bigg| V_1^{(B)} =t \Bigg] \nonumber \\
&=\sum_{i=0}^{K-1} \frac{\mu_B^{K-i-1}}{\mu^K} \frac{1}{s_B} \sum_{t=1}^B \prob(Z_1^{(B)} =t) s_t \frac{1}{s_t} \sum_{A \in \pow ([t]) \setminus \{\emptyset\}} \Bigg(1- \exp \bigg\{ - \sum_{m \in A} y_m f(x_m) \bigg\} \Bigg) \nonumber \\
&  \hspace{2cm} \prod_{m \in A} \prob(\tilde{Z}_i^{(B)} = y_m) \Big( \prob(Z_i^{(B)} =0) \Big)^{|A|} \Big( \prob(Z_i^{(B)} =0) \Big)^{t- |A|} \nonumber \\
&= \frac{1}{\mu^K s_B} \sum_{i=0}^{K-1} \mu_B^{K-i-1} \exptn \Bigg[ \sum_{A \in \pow([\tilde{U}^{(B)}_1]) \setminus \{\emptyset\}} \bigg(  1- \exp \bigg\{ - \sum_{m \in A} \tilde{Z}_i^{(m,B)} f(x_m) \bigg\} \bigg) \nonumber \\
& \hspace{2cm} \Big(\prob(Z_i^{(B)} =0) \Big)^{|A|} \Big(\prob(Z_i^{(B)}=0) \Big)^{\tilde{U}^{(B)}_1- |A|}\Bigg] \prob(U_1^{(B)} >0). \label{eq:lap_nskb6}
}
Hence, combining the expressions in \eqref{eq:lap_nskb5} and \eqref{eq:lap_nskb6} and the definition of $\Upsilon$, it is easy to verify that the Laplace functional of $N_*^{(K,B)}$ is same as in \eqref{eq:apll22}. This completes the proof of part(a).

To show (b) we  let $B \to \infty$ and use Theorem 4.1 of \citet{lindskog:resnick:roy:2014} to get that the right-hand side of \eqref{eq:apll22} converges to
\aln{
& \estar \Bigg[ \exp \bigg\{ - W \sum_{i=0}^{K-1} \frac{1}{\mu^{i+1}} \int_{\real^\bbn \setminus \{ \mathbf{0}\}} \exptn \bigg[ \sum_{A \in \pow(\bbn(U_1)) \setminus \{ \emptyset \}} \bigg( 1 - \exp \Big\{ - \sum_{m \in A} \tilde{Z}_i^{(m)} f(x_m) \Big\} \bigg) \nonumber \\
& \hspace{2cm}\prob(U_1>0) \Big( \prob(Z_i > 0) \Big)^{|A|} \Big( \prob(Z_i=0) \Big)^{U_1 - |A|} \bigg] \lambda(\dtv \mathbf{x}) \bigg\} \Bigg] \label{eq:lap_B_infty}
}
Here, $U_1 \eqd Z_1$ and is independent of $W$, $\{Z_i^{(m)} : m \in  \bbn \}$ is a collection of independent copies of $Z_i$, which is also independent of $W$ and $U_1$ for every fixed $i \in \bbn_0$, $\{\{Z_i^{(m)}\}_{m \in \bbn} : i \in \bbn_0\}$ are independent sequences of random variables and $\lambda$ is introduced in \eqref{eq:jtregvar}. We shall construct another point process with same Laplace functional as in \eqref{eq:lap_B_infty}. Recall $\poi$ from \eqref{eq:defn_poi}, which is independent of $W$. We can define random variables $V^{(K)}$ and  $\mathbf{T}^{(K)} =(T_i^{(K)} : i \in \bbn) $  an $\bbn_0^{\bbn}$-valued random variable by replacing $\mu_B$ and $Z_i^{(B)}$ with $\mu$ and $Z_i$, respectively, in equations~\eqref{eq:pmf_VB} and~\eqref{eq:cpmf_TB}.
Consider the collection of random variables $\{(V_l^{(K)}, \mathbf{T}_l^{(K)}) : l \in \bbn\}$, which are independent copies of $(V^{(K)}, \mathbf{T}^{(K)})$ and also independent of $W$ and $\poi$. Now define the following point process
\alns{
N_*^{(K)} = \sum_{l=1}^{\infty} \sum_{m=1}^{V^{(K)}_l} T_{l,m}^{(K)} \delta_{(s_K W)^{1/\alpha} \xi_{l,m}}.
}
Again we can easily verify that the point process $N_*^{(K)}$ has the Laplace functional as in \eqref{eq:lap_B_infty} by computing the Laplace functional following the same steps as we have done for $N_*^{(K,B)}$. This completes the proof of part (b).

Finally, to show (c) we argue as follows. It is easy to see that as $K\to \infty$,  the right-hand side of \eqref{eq:lap_B_infty} becomes
\aln{
& \estar \Bigg[ \exp \bigg\{ - W \sum_{i=0}^{\infty} \frac{1}{\mu^{i+1}} \int_{\real^\bbn \setminus \{ \mathbf{0}\}} \exptn \bigg[ \sum_{A \in \pow([U_1]) \setminus \{ \emptyset \}} \bigg( 1 - \exp \Big\{ - \sum_{m \in A} \tilde{Z}_i^{(m)} f(x_m) \Big\} \bigg) \nonumber \\
& \hspace{2cm}\prob(U_1>0) \Big( \prob(Z_i > 0) \Big)^{|A|} \Big( \prob(Z_i=0) \Big)^{U_1 - |A|} \bigg] \lambda(\dtv \mathbf{x}) \bigg\} \Bigg] \label{eq:lap_final_dep_brw}
}
and it can similarly be verified that this is the Laplace functional of $N_*$. Using the homogeneity property stated in \eqref{eq:homregvar}, we get that, for every $b>0$,
\alns{
& \Psi_{N_*}(g \|b) =\estar\bigg[ \exp\Big\{ - N_*(\mbfs_b g) \Big\}\bigg]  \\
&= \estar \Bigg[ \exp \bigg\{ - b^{-\alpha} W \sum_{i=0}^{\infty} \frac{1}{\mu^{i+1}} \int_{\real^\bbn \setminus \{ \mathbf{0}\}} \exptn \bigg[ \sum_{A \in \pow([U_1]) \setminus \{ \emptyset \}} \bigg( 1 - \exp \Big\{ - \sum_{m \in A} \tilde{Z}_i^{(m)} f(x_m) \Big\} \bigg) \nonumber \\
& \hspace{2cm}\prob(U_1>0) \Big( \prob(Z_i > 0) \Big)^{|A|} \Big( \prob(Z_i=0) \Big)^{U_1 - |A|} \bigg] \lambda(\dtv \mathbf{x}) \bigg\} \Bigg].
}
Hence, using Proposition \ref{sub:zet:prop2}, we can say that $N_*$ admits an SScDPPP representation. This completes the proof,  except that we have to verify that $N_*$, $\nskb$ and $N_*^{(K)}$ are Radon. This is the content of the next lemma. 

\begin{lemma}\label{lemma:limit:radon}
$ N_*$, $\nskb$ and $N_*^{(K)}$ are random elements of $\scrm(\bar{\real}_0)$.
\end{lemma}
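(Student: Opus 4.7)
I will prove Radonness by fixing $\epsilon>0$ and showing $N(B_\epsilon)<\infty$ almost surely (under $\pstar$) for $B_\epsilon := \{x\in\zrer : |x|\geq\epsilon\}$, where $N$ denotes each of $\nskb$, $N_*^{(K)}$, $N_*$ in turn. This suffices since the sets $B_\epsilon$ exhaust the compact subsets of $\zrer$.

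The case of $\nskb$ is easiest because all marks are bounded. From \eqref{eq:pmf_VB} one has $V_l^{(B)}\leq B$, while $T_{l,k}^{(B)}\leq Z_{K-1}^{(B)}\leq B^{K-1}$ by construction of the pruned process. Hence
\[
\nskb(B_\epsilon) \leq B^K \cdot \#\!\left\{l : \max_{k\leq B}|\xi_{l,k}| > \epsilon(s_B W)^{-1/\alpha}\right\},
\]
and conditional on $W$ the count on the right is Poisson with mean equal to $\lambda^{(B)}\{\bld{x}\in\real^B : \max_k|x_k|>\epsilon(s_B W)^{-1/\alpha}\}$, which is finite since that set is bounded away from $\bld{0}_B$ and $\lambda^{(B)}\in\bbm(\real^B\setminus\{\bld{0}_B\})$.

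For $N_*$ (and analogously $N_*^{(K)}$) the marks are unbounded and a direct first-moment bound breaks down: a short computation from \eqref{eq:cond_pmf_T1} shows that $\exptn[T_k\mid V=v]$ already contains the divergent series $\sum_{i=0}^\infty \mu^{-i}\exptn(Z_i)=\sum_i 1$, so $\estar[N_*(B_\epsilon)\mid W]$ may itself be infinite. I therefore decouple ``how many atoms contribute'' from ``how much each contributes''. Writing $c_\epsilon = \epsilon(s\mu^{-1}W)^{-1/\alpha}$, $M_l = \max_{k\leq V_l}|\xi_{l,k}|$, and $S_l = \sum_{k=1}^{V_l}T_{l,k}$, one has the pathwise estimate $N_*(B_\epsilon)\leq\sum_l S_l\,\bbo_{(M_l>c_\epsilon)}$, and it suffices to verify: (i) only finitely many $l$ satisfy $M_l>c_\epsilon$, and (ii) each $S_l$ is finite, almost surely.

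For (i), viewing $\{(\pmb{\xi}_l,V_l)\}_l$ as a marked Poisson random measure on $\rnz$ with intensity $\lambda$ and i.i.d.\ marks distributed as $V$, Campbell's formula together with Boole's inequality gives $\exptn[\#\{l:M_l>c_\epsilon\}\mid W]\leq\exptn[V]\cdot\lambda\{|x_1|>c_\epsilon\}$. Now $\lambda\{|x_1|>c_\epsilon\}<\infty$ because $\{\bld{x}:|x_1|>c_\epsilon\}$ is bounded away from $\bld{0}_\infty$ in the product metric on $\real^\bbn$, while identical distribution of the $X_k$'s ensures $\lambda\{|x_k|>c_\epsilon\}$ is independent of $k$. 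For $\exptn[V]<\infty$, bounding $1-\prob(Z_i=0)^v\leq 1$ in \eqref{eq:pmf_V1} gives $\prob(V=v)\leq s^{-1}\prob(Z_1=v)\mu/(\mu-1)$, so $\exptn[V]\leq\mu^2/(s(\mu-1))<\infty$ by the finite-mean assumption on $Z_1$. Part (ii) is immediate since $V_l\in\bbn$ and $T_{l,k}\in\bbn_0$ are proper random variables. The argument for $N_*^{(K)}$ is identical with $V^{(K)},\bld{T}^{(K)}$ replacing $V,\bld{T}$, and the analogous bound $\exptn[V^{(K)}]<\infty$. The main delicacy, which I want to flag, is precisely the divergence of the unconditional first moments: it forces the pathwise/a.s.\ decoupling above rather than a direct $L^1$ bound on $N_*(B_\epsilon)$.
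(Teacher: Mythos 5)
Your proposal is correct and follows essentially the same route as the paper's proof: both arguments sidestep the divergent first moment of $N_*(B_\epsilon)$ itself by bounding the expected number of \emph{unweighted} contributing Poisson atoms (you via Campbell's formula and $\exptn[V]<\infty$ at the level of clusters $l$; the paper via Poisson thinning at the level of pairs $(l,m)$, yielding $\exptn(M(A))<\infty$), and then conclude because each multiplicity $T_{l,k}$ is a.s.\ finite. Your explicit treatment of $\nskb$ through the bounded marks $V^{(B)}\le B$, $T^{(B)}_{l,k}\le B^{K-1}$, and your flagging of why a direct $L^1$ bound fails, are both sound and merely make explicit what the paper leaves as ``the other two cases can be done similarly.''
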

\begin{proof}
We shall give the proof for $N_*$. The other two cases can be done similarly. Let $A \subset \real$ be bounded away from $0$. It is enough to show that $N_*(A) < \infty$ almost surely. It is clear that if we can show that, conditioned on the random variable $W$, there are only finitely many Poisson $j_{l,m}$ points in the set $A$, then we are done because then $N_*(A)$ is a finite sum of the corresponding random variables $T_{l,m}$.  Our first step will be to show that $\exptn(M(A)) < \infty$, where
\beq \label{eq:polf53}
M := \sum_{l=1}^\infty \sum_{m=1}^{V_l} \delta_{j_{l,m}}.
\eeq
Let
$$A_m = \real_0 \times \real_0 \times \ldots\times \underbrace{A}_{m-th ~ position} \times \real_0 \times \ldots \subset \real^\infty \setminus \{0_\infty\},$$
which is bounded away from $0_\infty$. It is clear that $M_m (A) :=\sum_{l=1}^\infty \delta_{j_{l,m}} (A) = \sum_{l=1}^\infty \delta_{\bld{j}_l} (A_m)$.
Hence, $M_m(A)$ is a Poisson random variable for every $m \ge 1$, with mean $\lambda(A_m) = \lambda^{(1)}(A)$, as we have assumed that the marginals of the measure $\lambda(\cdot)$ are same. Also observe that  $\sum_{l=1}^{\infty} \sum_{m=1}^{V_l} \delta_{j_{l,m}} (A) =\sum_{m=1}^\infty \sum_{l : V_l \ge m} \delta_{j_{l,m}}(A)$.
 From the fact that $M_m(A) \sim \mbox{ Poisson}(\lambda^{(1)}(A))$, it is clear that
\beq \label{eq:polf56}
\sum_{l : V_l \ge m} \delta_{j_{l,m}} (A) \sim \mbox{ Poisson}( \prob(V_1 \ge m) \lambda^{(1)}(A))
\eeq
by an independent thinning of a Poisson point process. Hence we get that
\begin{align} \label{eq:polf57}
 \exptn \Big( \sum_{l=1}^\infty \sum_{m=1} ^{V_l} \delta_{j_{l,m}}(A) \Big)  & = \exptn \Big( \sum_{m=1}^\infty \sum_{l: V_l \ge m} \delta_{j_{l,m}} (A) \Big)
  = \sum_{m=1}^\infty \exptn \Big( \sum_{l: V_l \ge m} \delta_{j_{l,m}} (A) \Big)
 = \sum_{m=1}^\infty \prob(V_1 \ge m) \lambda^{(1)}(A) \nonumber \\
&= \lambda^{(1)}(A) \sum_{m=1}^\infty \prob(\tilde{Z}_1 \ge m)
 = \frac{\mu}{\prob(Z_1 >0)} \lambda^{(1)}(A) < \infty.
\end{align}
 We can ignore the constants and see that
\begin{align} \label{eq:polf58}
\exptn \bigg( \sum_{l=1}^\infty \sum_{m=1}^{V_l} \delta_{W^{1/\alpha} j_{l,m}}(A)\bigg) & = \exptn \bigg( \exptn \bigg(\sum_{l=1}^\infty \sum_{m=1}^{V_l} \delta_{W^{1/\alpha} j_{l,m}} (A) \bigg| W\bigg) \bigg) \nonumber \\
& = \exptn \Big( \frac{\mu}{\prob(Z_1 >0)} \lambda^{(1)}( \mbfs_{W^{-1/\alpha}}A) \Big)
= \exptn \Big( W \frac{\mu}{\prob(Z_1 >0)} \lambda^{(1)}(A) \Big) \nonumber \\
&  = \frac{\mu}{\prob(Z_1 >0)} \lambda^{(1)}(A) < \infty.
\end{align}
 Hence we are done with the fact that $N_*$ is a Radon measure, and so an element from $\scrm(\bar{\real}_0)$. \qedhere

\end{proof}

\section{Consequences of Theorem~\ref{thm:mainthm2}} \label{sec:conseq}

\subsection{Proof of Corollary \ref{thm:maxima}}

Recall from Section~\ref{sec:prelim_main} that $M_n$ denotes the position of the rightmost particle of the $n^{th}$ generation. It is easy to see that, for every $x>0$,
\aln{
\lim_{n \to \infty} \pstar(c_n^{\inv} M_n < x) = \lim_{n \to \infty} \pstar(N_n (x,\infty) =0) = \pstar(N_*(x,\infty) =0).
\label{eq:maxima_point_process}}
So it is enough to compute the probability $\pstar(N_*(x,\infty) =0)$. Since $N_*$ is a Cox cluster process, we first condition on $W$, to get underlying Poisson random measure.  The right-hand side of \eqref{eq:maxima_point_process} becomes
\aln{
\estar \Bigg[ \prob \Bigg( \sum_{l=1}^\infty \sum_{k=1}^{V_l} T_{lk} \delta_{(sW)^{1/\alpha} \xi_{lk}} (x,\infty] =0 \Bigg| W \Bigg) \Bigg]. \label{eq:maxima_conditioned_W}
}
We next introduce some notation that will be useful later. Let $G_0= [-\infty, 1]$ and $G_1=(1,\infty]$. Assume that $H^t_{i_1, \ldots, i_t} \subset \real^\bbn$ denotes the set
\alns{
G_{i_1} \times G_{i_2} \times \cdots \times G_{i_t} \times \real \times \cdots
}
such that $i_j \in \{0,1\}$ for $1 \le j \le t$. It is clear that, for a fixed $t$, $\{H^t_{i_1, \ldots,i_t}: i_j \in {0,1}, 1\le j \le t\}$ is a collection of disjoint sets. To each of the sets $H^t_{i_1, \ldots,i_t}$ we associate a set $R^t_{i_1, \ldots,i_t}:= \{1 \le j \le t : i_j=1\}$. We introduce the set $O^t_{i_1, \ldots,i_t} = \prod_{j=1}^t N_{i_j} \prod_{j > t}{\bbn_0}$, where $N_{i_k} = \bbn_0$ if $i_k=0$ and $\prod_{j \in R^t_{i_1, \ldots,i_t}} N_{i_j} = \bbn_0^{|R^t_{i_1, \ldots, i_t}|} \setminus \{0_{|R^t_{i_1, \ldots,  i_t}|}\}$. The conditional probability inside the expectation in  \eqref{eq:maxima_conditioned_W} becomes
\aln{
& \prob \Bigg( \sum_{l=1}^\infty \sum_{v=1}^\infty \sum_{k=1}^{V_l} T_{lk} \delta_{V_l}(\{v\}) \delta_{(sW)^{1/\alpha} \xi_{lk}} (x,\infty] =0 \Bigg| W  \Bigg) \nonumber \\
&= \prob \Bigg( \sum_{v=1}^\infty \sum_{l=1}^\infty  \sum_{k=1}^{v} T_{lk} \delta_{V_l}(\{v\}) \delta_{(sW)^{1/\alpha} \xi_{lk}} (x,\infty] =0 \Bigg| W  \Bigg) \nonumber \\
&= \prob \Bigg( \sum_{v=1}^\infty \sum_{\stackrel{i_1,  \ldots, i_v}{ i_1+i_2+\cdots + i_v >0}}\sum_{l=1}^\infty  \delta_{\Big(V_l, \mathbf{T}_l, (sW)^{1/\alpha} x^\inv \pmb{\xi}_l\Big)} \bigg( \{v\} \times O^v_{i_1, \ldots,i_v} \times H^v_{i_1, \ldots, i_v}\bigg)  =0 \Bigg| W  \Bigg) \nonumber \\
&= \prob\Bigg[ \bigcap_{v=1}^\infty \bigcap_{\stackrel{i_1,  \ldots, i_v}{ i_1+i_2+\cdots + i_v >0}} \Bigg( \sum_{l=1}^\infty \delta_{\Big(V_l, \mathbf{T}_l, (sW)^{1/\alpha} x^\inv \pmb{\xi}_l\Big)} \bigg( \{v\} \times O^v_{i_1, \ldots,i_v} \times H^v_{i_1, \ldots, i_v}\bigg)  =0   \Bigg) \Bigg| W \Bigg]. \label{eq:maxima_poisson}
}
It is important to note that $\{v\} \times O^v_{i_1, \ldots,i_v} \times H^v_{i_1, \ldots, i_v}$ is a collection of disjoint sets over $v$ and $(i_1, \ldots,i_v)$. Using the fact that
\aln{
\sum_{l=1}^\infty \delta_{\Big( V_l, \mathbf{T}_l, (sW)^{1/\alpha} x^{-1} \pmb{\xi}_l \Big)} \Bigg( \{v\} \times O^v_{i_1, \ldots,i_v} \times H^v_{i_1, \ldots, i_v} \Bigg) \Bigg| W \sim \mbox{Poisson }\Bigg( sW x^{-\alpha}\prob \Big( \mathbf{T} \in O^v_{i_1, \ldots,i_v}, V=v \Big) \lambda(H^v_{i_1, \ldots,i_v}) \Bigg)
}
the right-hand side of \eqref{eq:maxima_poisson} becomes
\aln{
\exp \Bigg\{ -sW x^{-\alpha}\sum_{v=1}^{\infty} \sum_{\stackrel{i_1,\ldots,i_v}{i_1+\cdots + i_v >0} } \prob \Big( \mathbf{T} \in O^v_{i_1, \ldots,i_v}, V=v \Big) \lambda(H^v_{i_1, \ldots,i_v})  \Bigg\}. \label{eq:maxima_exponent_conditional}
}
We shall find a closed form expression for the exponent and after that we shall show that the exponent of \eqref{eq:maxima_exponent_conditional} is finite. Using the exchangeability property of $T_1, \ldots, T_v$ conditioned on the event $V=v$, for $|R^t_{i_1, \ldots,i_v} |=k$ we get 
\alns{
\prob(\mathbf{T} \in O^v_{i_1, \ldots, i_v} |V=v) &= \prob( \mbox{ at least one of } T_1, \ldots,T_k \mbox{ is positive} |V=v) \nonumber \\
&= \frac{1}{s_v} \sum_{i=0}^\infty \frac{1}{\mu^{i+1}} \sum_{\stackrel{y_1, \ldots, y_v}{ Y_1 + \cdots +y_k >0}} \prod_{m=1}^v \prob(Z_i= y_m) \nonumber \\
&= \frac{1}{s_v} \sum_{i=0}^\infty \frac{1}{\mu^{i+1}} \bigg( 1- \Big(\prob(Z_i=0) \Big)^k \bigg) =\frac{s_k}{s_v}.
}
Hence the sum in the exponent of \eqref{eq:maxima_exponent_conditional} becomes
\aln{
&\frac{1}{s} \sum_{v=1}^\infty \prob(Z_1=v)s_t \frac{1}{s_v} \sum_{k=1}^v s_k \sum_{\stackrel{i_1,\ldots,i_v}{i_1+ \cdots +i_v >0, |R^t_{i_1, \ldots, i_v}|=k }} \lambda(H^v_{i_1, \ldots , i_v}) \label{eq:maxima_final_form} \\
&\le \frac{1}{s(\mu-1)} \sum_{v=1}^\infty \prob(Z_1=v) \sum_{j=1}^v \sum_{\stackrel{i_1, \ldots, i_v}{ i_j=1}} \lambda(H^v_{i_1, \ldots,i_v}) \nonumber \\
& \le \frac{1}{s(\mu-1)} \sum_{v=1}^\infty \prob(Z_1=v) v \lambda^{(1)}(G_1) < \infty, \nonumber
}
using the facts that $s_k \le (\mu-1)^\inv$, the projections of $\lambda(\cdot)$ are identical and $\bigcup_{i_1, \ldots,i_v : i_j=1} H^v_{i_1, \ldots,i_v} \subset \prod_{i=1}^{j-1} \real \times G_1 \times \prod_{i>j+1} \real$. Finally,  combining \eqref{eq:maxima_exponent_conditional} and \eqref{eq:maxima_final_form}, we get \eqref{eq:maxima_lemma} with \label{not:kappa_lambda}
\beq
\kappa_\lambda = \sum_{v=1}^\infty \prob(Z_1=v) \sum_{k=1}^v s_k \sum_{\stackrel{i_1,\ldots,i_v}{i_1+ \cdots +i_v >0, |R^v_{i_1, \ldots, i_v}|=k }} \lambda(H^v_{i_1, \ldots , i_v}). \label{eq:maxima_c_lambda} \qedhere
\eeq

\begin{remark}  Theorem \ref{thm:maxima} is an extension of the main result of \citet{durrett:1983} to a dependent setup. Using the fact that $\lambda(\cdot) = \lamiid(\cdot)$, it is easy to get the asymptotic distribution of the maxima in case of branching random walk with regularly varying independent step sizes.  It is easy to see that in this case
$$\kappa_\lambda = \sum_{i=0}^\infty \frac{1}{\mu^i} \prob(Z_i >0).$$
The later can also be obtained from Corollary \ref{propn:mainresult_bhr} below; see Theorem~2.5 of \citet{bhattacharya:hazra:roy:2014}.
\end{remark}

\subsection{I.I.D.\ Displacements}
 In \citet{bhattacharya:hazra:roy:2014} the model is considered with i.i.d.  displacement random variables. Using Theorem \ref{thm:mainthm2}, with $\lambda= \lamiid$ (see \eqref{eq:iidprocess_lambda}), we get Theorem 2.1 of \citet{bhattacharya:hazra:roy:2014}.

\begin{cor}[Theorem 2.1 and Theorem 2.3 in \citet{bhattacharya:hazra:roy:2014}]  \label{propn:mainresult_bhr}
Under the assumptions of Theorem \ref{thm:mainthm2} and $\lambda=\lamiid$, for every $g \in \ckr$,
\aln{
& \estar [\exp\{-N_*(g)\}] \nonumber \\
& = \estar \bigg[  \exp \bigg\{ -  W \sum_{i=0}^\infty \frac{1}{\mu^i} \exptn \bigg[  \int_{\real} \bigg( 1 - \exp \bigg\{ - \tilde{Z}_i g(x) \bigg\} \bigg) \prob(Z_i >0 )    \nu_\alpha(\dtv x) \bigg]  \bigg\} \bigg]. \label{eq:laplacefun_iidcase}
}
In particular, $N_* \sim \mbox{SScDPPP}(m_\alpha, T\delta_{\varepsilon}, (rW)^{1/\alpha} )$ where $\varepsilon$ is an $\pm 1$-valued random variable with $\prob(\varepsilon =1) = p$, $m_\alpha$ is a measure on $(0,\infty)$ that is the same as $\nu_\alpha$ with $q=0$, and $T$ is a positive integer-valued random variable with probability mass function
\alns{
\prob(T=y) = \frac{1}{r} \sum_{i=0}^\infty \frac{1}{\mu^i} \prob(Z_i = y)
}
with $r = \sum_{i=0}^\infty \mu^{-i} \prob(Z_i >0)$.
\end{cor}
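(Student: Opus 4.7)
The plan is to compute the Laplace functional of $N_*$ directly from the representation \eqref{eq:rep_limit_measure} with $\lambda=\lamiid$, and then to identify the resulting expression as the Laplace functional of $\text{SScDPPP}(m_\alpha,T\delta_\varepsilon,(rW)^{1/\alpha})$.

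Since $\lamiid$ is supported on the coordinate axes of $\real^\bbn$ (cf.\ \eqref{eq:iidprocess_lambda}), the Poisson random measure $\poi$ is equivalent to a PRM on $\bbn\times(\real\setminus\{0\})$ of intensity $(\text{counting})\otimes\nu_\alpha$: each point $\pmb{\xi}_l$ has a single nonzero coordinate $\eta_l$ at some random index $J_l$. Because $g\in\ckr$ vanishes in a neighborhood of $0$, the cluster contribution $\sum_{k=1}^{V_l} T_{lk}\,g((s\mu^{-1}W)^{1/\alpha}\xi_{lk})$ collapses to $T_{l,J_l}\,g((s\mu^{-1}W)^{1/\alpha}\eta_l)\bbo_{J_l\le V_l}$. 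Conditioning on $W$, applying Campbell's formula to the marked PRM, using the exchangeability of $T_1,\ldots,T_V$ given $V$, and summing $\sum_{j\ge 1}\bbo_{j\le v}=v$, the Laplace functional becomes
\[
\estar\bigg[\exp\bigg\{-\sum_{v\ge 1}v\,\prob(V=v)\int\big(1-\exptn[e^{-T_1g((s\mu^{-1}W)^{1/\alpha}\eta)}\mid V=v]\big)\,\nu_\alpha(d\eta)\bigg\}\bigg].
\]
A direct computation from \eqref{eq:pmf_V1}--\eqref{eq:cond_pmf_T1} reduces $1-\exptn[e^{-tT_1}\mid V=v]$ to $s_v^{-1}\sum_{i\ge 0}\mu^{-i}\exptn[1-e^{-tZ_i}]$; combined with $\prob(V=v)=s_v\prob(Z_1=v)/s$ and $\sum_v v\,\prob(Z_1=v)=\mu$, the outer sum in $v$ becomes $(\mu/s)\sum_{i\ge 0}\mu^{-i}\exptn[1-e^{-tZ_i}]$. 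The homogeneity of $\nu_\alpha$ then cancels the factor $s/\mu$ after the substitution $x=(s\mu^{-1}W)^{1/\alpha}\eta$, yielding exactly \eqref{eq:laplacefun_iidcase} once one rewrites $\exptn[1-e^{-Z_ig(x)}]=\prob(Z_i>0)\exptn[1-e^{-\tilde Z_ig(x)}]$.

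For the SScDPPP claim, I would compute the Laplace functional of $\text{SScDPPP}(m_\alpha,T\delta_\varepsilon,(rW)^{1/\alpha})$ by conditioning on $W$, applying Campbell's formula to the underlying PRM of intensity $m_\alpha$ on $(0,\infty)$, splitting the $\varepsilon=\pm 1$ branches with weights $p$ and $q$, and using the homogeneity $m_\alpha(c\,\cdot)=c^{-\alpha}m_\alpha(\cdot)$ to factor out $rW$ and reassemble the two half-lines into $\nu_\alpha$. This produces $\estar\big[\exp\{-rW\int(1-\exptn_T[e^{-Tg(x)}])\,\nu_\alpha(dx)\}\big]$, and the identity $r\exptn_T[1-e^{-Tg(x)}]=\sum_{i\ge 0}\mu^{-i}\exptn[1-e^{-Z_ig(x)}]$, immediate from the pmf of $T$, completes the match with \eqref{eq:laplacefun_iidcase}. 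The main obstacle is the algebraic reduction of $1-\exptn[e^{-tT_1}\mid V=v]$ to the clean form above, which requires careful cancellation of the $\prob(Z_i=0)^v$ terms against the normalization $s_v=\sum_i\mu^{-i}(1-\prob(Z_i=0)^v)$.
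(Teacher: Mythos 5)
Your proposal is correct, but it takes a different route from the paper's. The paper does not go back to the cluster representation \eqref{eq:rep_limit_measure}; instead it starts from the analytic expression \eqref{eq:lap_final_dep_brw} for the Laplace functional of $N_*$ (already derived in the proof of Lemma~\ref{lemma:Limit_Laplace}), plugs in the axis-supported form of $\lamiid$, and performs a combinatorial reduction over the subsets $A \in \pow([\tilde U_1])$ containing the active coordinate $l$ (the key steps being the bijection between subsets of $[t]$ containing $l$ and subsets of $[t-1]$, and the identity $\sum_{A \in \pow([t])} \prob(Z_i>0)^{|A|}\prob(Z_i=0)^{t-|A|}=1$, which together produce the factor $\mu$). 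You instead compute $\estar[e^{-N_*(g)}]$ directly from \eqref{eq:rep_limit_measure} via the marked-PRM/Campbell formula, using that $\lamiid$ forces each $\pmb{\xi}_l$ to have a single nonzero coordinate; your cancellation of the $\prob(Z_i=0)^v$ terms in $1-\exptn[e^{-tT_1}\mid V=v]$ against $s_v=\sum_i\mu^{-i}(1-\prob(Z_i=0)^v)$ is the exact counterpart of the paper's subset-counting step, and I verified it goes through (the marginal of $T_1$ given $V=v$ is $\frac{1}{s_v}\sum_i\mu^{-i}[\exptn e^{-tZ_i}-\prob(Z_i=0)^v]$, and the constant term cancels). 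The two approaches buy different things: the paper's is shorter given that \eqref{eq:lap_final_dep_brw} is already on the table, while yours is more self-contained and probabilistically transparent, and in particular it actually proves the SScDPPP identification by matching Laplace functionals, whereas the paper defers that part entirely to \citet{bhattacharya:hazra:roy:2014}. The only point you should make explicit is the independence of $T$ and $\varepsilon$ in the decoration $T\delta_\varepsilon$ when you split the $\varepsilon=\pm1$ branches; with that stated, the exponents match exactly after the homogeneity substitution.
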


\begin{proof}

We  start with the exponent in \eqref{eq:lap_final_dep_brw} and shall show that it is same as that of the i.i.d.\ case as described in~\eqref{eq:laplacefun_iidcase}. For every $i \ge 1$, using the expression for $\lamiid(\cdot)$ in exponent of \eqref{eq:lap_final_dep_brw}, we get
\begin{align*}
& \exptn \bigg[ \sum_{l=1}^\infty \int_{\rnz} \sum_{A \in \pow([\tilde{U}_1]) \setminus \{\emptyset\}} \bigg( 1 - \exp \bigg\{ -\sum_{m \in A} \tilde{Z}_i^{(m)} g(x_m) \bigg\} \bigg)  \nonumber \\
& \hspace{3cm} \prob \Big( Z_1 > 0 \Big) \Big(\prob(Z_i >0 ) \Big)^{|A|} \Big( \prob(Z_i =0) \Big)^{\tilde{U}_1 - |A|}  \gamma_l(\dtv \bld{x}) \bigg] \nonumber \\
& = \exptn \bigg[ \sum_{l=1}^\infty \int_{\real} \sum_{\stackrel{A \in \pow([\tilde{U}_1]) \setminus \{\emptyset\}}{l\in A}} \bigg( 1 - \exp \bigg\{ - \tilde{Z}_i^{(l)} g(x_l) \bigg\} \bigg)  \nonumber \\
& \hspace{3cm} \prob \Big( Z_1 > 0 \Big) \Big(\prob(Z_i >0 ) \Big)^{|A|} \Big( \prob(Z_i =0) \Big)^{\tilde{U}_1 - |A|}  \nu_\alpha(\dtv x_l) \bigg].
\end{align*}
Using the fact that $\tilde{Z}_i^{(l)} \eqd \tilde{Z}_i$, we get
\begin{align*}
&  \exptn \bigg[  \int_{\real} \sum_{l=1}^{\tilde{U}_1} \sum_{\stackrel{A \in \pow([\tilde{U}_1]) \setminus \{\emptyset\}}{l\in A}} \bigg( 1 - \exp \bigg\{ - \tilde{Z}_i g(x) \bigg\} \bigg)  \nonumber \\
&\hspace{3cm} \prob \Big( Z_1 > 0 \Big) \Big(\prob(Z_i >0 ) \Big)^{|A|} \Big( \prob(Z_i =0) \Big)^{\tilde{U}_1 - |A|}  \nu_\alpha(\dtv x) \bigg]
\end{align*}
as $l \in A \subset [\tilde{U}_1]$. We would like to interchange the integral and the expectation, to get
\begin{align*}
&\exptn \bigg[  \int_{\real} \bigg( 1 - \exp \bigg\{ - \tilde{Z}_i g(x) \bigg\} \bigg) \prob(Z_i >0 ) \sum_{t=1}^\infty \frac{\prob(Z_1 =t)}{\prob(Z_1 >0)} \sum_{l=1}^{t} \sum_{A \setminus \{l\} \in \pow([t] \setminus \{l\}) }   \nonumber \\
& \hspace{3cm} \prob \Big( Z_1 > 0 \Big) \Big(\prob(Z_i >0 ) \Big)^{|A|-1} \Big( \prob(Z_i =0) \Big)^{t - |A|}  \nu_\alpha(\dtv x) \bigg].
\end{align*}
Next we  use the fact that the number of subsets of $[t]$ containing $l$ is the same as the number of all  subsets of $[t-1]$, to get
\begin{align}
&\exptn \bigg[  \int_{\real} \bigg( 1 - \exp \bigg\{ - \tilde{Z}_i g(x) \bigg\} \bigg) \prob(Z_i >0 ) \sum_{t=1}^\infty \frac{\prob(Z_1 =t)}{\prob(Z_1 >0)} \sum_{l=1}^{t} \sum_{A  \in \pow([t-1] ) }   \nonumber \\
& \hspace{3cm} \prob \Big( Z_1 > 0 \Big) \Big(\prob(Z_i >0 ) \Big)^{|A|} \Big( \prob(Z_i =0) \Big)^{t - |A|}  \nu_\alpha(\dtv x) \bigg]  \nonumber \\
&=\exptn \bigg[  \int_{\real} \bigg( 1 - \exp \bigg\{ - \tilde{Z}_i g(x) \bigg\} \bigg) \prob(Z_i >0 ) \sum_{t=1}^\infty \frac{\prob(Z_1 =t)}{\prob(Z_1 >0)} t  \prob \Big( Z_1 > 0 \Big)   \nu_\alpha(\dtv x) \bigg] \nonumber \\
&= \mu \exptn \bigg[  \int_{\real} \bigg( 1 - \exp \bigg\{ - \tilde{Z}_i g(x) \bigg\} \bigg) \prob(Z_i >0 )    \nu_\alpha(\dtv x) \bigg]
\end{align}
using the fact that
\alns{
\sum_{A \in \pow([t])} \Big(\prob(Z_i>0)\Big)^{|A|} \Big( \prob(Z_i=0) \Big)^{t- |A|} = 1.
}
Hence in the i.i.d.\ case the Laplace functional of the limiting random measure is
\aln{
& \estar \bigg[  \exp \bigg\{ - \frac{1}{\mu} W \sum_{i=0}^\infty \frac{1}{\mu^i} \mu \exptn \bigg[  \int_{\real} \bigg( 1 - \exp \bigg\{ - \tilde{Z}_i g(x) \bigg\} \bigg) \prob(Z_i >0 )    \nu_\alpha(\dtv x) \bigg]  \bigg\}\bigg]  \nonumber \\
&= \estar \bigg[  \exp \bigg\{ -  W \sum_{i=0}^\infty \frac{1}{\mu^i} \exptn \bigg[  \int_{\real} \bigg( 1 - \exp \bigg\{ - \tilde{Z}_i g(x) \bigg\} \bigg) \prob(Z_i >0 )    \nu_\alpha(\dtv x) \bigg]  \bigg\} \bigg],
}
which is the same as obtained in Theorem 2.1 in \citet{bhattacharya:hazra:roy:2014}. For the SScDPPP-representation, we refer the reader to \citet{bhattacharya:hazra:roy:2014}. \qedhere

\end{proof}

\subsection{Bounded Offspring Distribution}

Assume $Z_1 \le B$ almost surely. In this case, replace $\lambda(\cdot)$ by $\lambda^{(B)}(\cdot)$, which is supported on $\real^{B}$. Following Theorem 6.1 (Page 173) from \citet{resnick:2007} on multivariate regular variation, it is clear that $\lambda^{(B)}(\cdot)$ on $\real^B$ can be written as the product measure $cm_\alpha \otimes \Theta$ on $(0,\infty] \times S^{B-1}$. Here, $S^{B-1} = \{\mathbf{x} \in \real^B : \norm{x} =1\}$ for any norm $\norm{\cdot}$ on $\real^{B}$ and, for every $x>0$, $cm_\alpha((x,\infty]) =c x^{-\alpha}$ with $c>0$ suitably chosen so that $\Theta(\cdot)$ becomes a probability measure on $S^{B-1}$. The measure $\Theta$ is called angular measure (see Remark 6.2 of the \citet{resnick:2007}). With the help of these measures, which arise naturally in multivariate extreme value theory, we get an explicit SScDPPP-representation in the case when $Z_1 \le B$ as described below.

\begin{cor} \label{cor:bounded_offspring}
Assume $Z_1 \le B$ almost surely. Then under the assumptions of Theorem \ref{thm:mainthm2},
$$N_* \sim \mbox{SScDPPP}(cm_\alpha, D, (sW)^{1/\alpha}),$$
where $D \eqd \sum_{k=1}^{V_1} T_{1k} \delta_{\eta_k}$ with $(V_1, \mathbf{T}_1)=(V_1, (T_{11}, T_{12}, \ldots))$ as described in Subsection~\ref{subsec:brw} and $\pmb{\eta}:= (\eta_1, \ldots,\eta_B)$ has law $\Theta$ on $S^{B-1}$.
\end{cor}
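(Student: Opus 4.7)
The plan is to start from the explicit series representation of $N_*$ furnished by Theorem~\ref{thm:mainthm2},
\[
N_* \eqd \sum_{l=1}^{\infty} \sum_{k=1}^{V_l} T_{lk}\,\delta_{(s\mu^{-1}W)^{1/\alpha}\xi_{lk}},
\]
and to exploit the bounded-offspring hypothesis to pass from the abstract Poisson process $\poi$ on $\rnz$ to a concrete Poisson process on $\real^B$, where the polar decomposition from multivariate extreme value theory is available.

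First I would observe that since $Z_1 \leq B$ almost surely, only the first $B$ coordinates of each $\pmb{\xi}_l = (\xi_{l1},\xi_{l2},\ldots)$ are ever accessed by the inner sum (because $T_{lk} = 0$ for $k > V_l$ and $V_l \leq Z_1 \leq B$), and the relevant intensity measure is the projected measure $\lambda^{(B)}$ on $\real^B$ arising in \eqref{eq:polf6}. Thus we may replace $\poi$ by a Poisson random measure on $\real^B \setminus \{\mathbf{0}_B\}$ with intensity $\lambda^{(B)}$ without changing the law of $N_*$.

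Next I would invoke the polar decomposition $\lambda^{(B)} = c\, m_\alpha \otimes \Theta$ on $(0,\infty) \times S^{B-1}$ recalled just before the statement of the corollary. By the mapping theorem for Poisson random measures, this decomposition lets us write $\pmb{\xi}_l = R_l \pmb{\eta}_l$ where $\sum_{l} \delta_{R_l}$ is a PRM on $(0,\infty)$ with intensity $c\, m_\alpha$, the sequence $\{\pmb{\eta}_l\}$ is iid with common law $\Theta$, and these two are independent (and independent of $W$ and of the marks $(V_l,\bld{T}_l)$). Substituting,
\[
N_* \eqd \sum_{l=1}^\infty \sum_{k=1}^{V_l} T_{lk}\,\delta_{(s\mu^{-1}W)^{1/\alpha} R_l \eta_{lk}} = \sum_{l=1}^\infty \mbfs_{(s\mu^{-1}W)^{1/\alpha} R_l} D_l,
\]
where $D_l := \sum_{k=1}^{V_l} T_{lk}\delta_{\eta_{lk}}$ are iid copies of $D$.

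Finally I would absorb the deterministic factor $\mu^{-1/\alpha}$ into the Poisson points: setting $\tilde R_l := \mu^{-1/\alpha} R_l$, the sequence $\{\tilde R_l\}$ is a PRM on $(0,\infty)$ with intensity $c\mu^{-1} m_\alpha$, and writing $(s\mu^{-1}W)^{1/\alpha} R_l = (sW)^{1/\alpha} \tilde R_l$ yields
\[
N_* \eqd \mbfs_{(sW)^{1/\alpha}} \sum_{l=1}^\infty \mbfs_{\tilde R_l} D_l,
\]
which, after renaming the normalizing constant $c\mu^{-1}$ as $c$ (equivalent to the choice of normalization of $\Theta$ in the polar decomposition), is exactly the defining representation of $\mbox{SScDPPP}(c m_\alpha, D, (sW)^{1/\alpha})$ via Definition~\ref{defn:ScDPPP}. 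The only delicate point is the bookkeeping of the constants under the polar decomposition and the placement of the factor of $\mu$, but this is a routine rescaling once the representation of $N_*$ is combined with the decomposition of $\lambda^{(B)}$.
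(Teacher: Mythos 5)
Your proposal is correct and follows essentially the same route as the paper: decompose the PRM with intensity $\lambda^{(B)}$ via the polar factorization $c\,m_\alpha\otimes\Theta$ into a radial PRM with iid angular marks $\pmb{\eta}_l\sim\Theta$, and regroup the series representation of $N_*$ as $\mbfs_{(sW)^{1/\alpha}}\sum_l\mbfs_{j_l}D_l$. Your explicit tracking of the factor $\mu^{-1/\alpha}$ (absorbed into the radial intensity, hence into the constant $c$) is a point the paper leaves implicit, but it is only a renormalization and does not change the argument.
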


\begin{proof}
It is clear that a Poisson random measure $\poi$ on $\real^B$ with intensity measure $\lambda^{(B)}$ admits the following representation
\alns{
\poi \eqd \sum_{l=1}^\infty \mbfs_{j_l} \delta_{\pmb{\eta}_l},
}
where $\{\pmb{\eta}_l : l \ge 1\}$ are independent copies of the random variable $\pmb{\eta}$, $\sum_l \delta_{j_l} \sim PRM(c m_\alpha)$, and the collections $\{\pmb{\eta}_l \}$ and $\{j_l \}$ are independent.
From the calculation of the Laplace functional of $N_*^{(K,B)}$ (see \eqref{eq:defn_nskb} above) it transpires that, in this setup,
\aln{
N_* \eqd \mbfs_{(sW)^{1/\alpha}} \sum_{l=1}^\infty \mbfs_{j_l} D_l,\label{eq:sscdppp_rep_bounded_branching}
}
where $\{D_l : l \ge 1\}$ is a collection of  independent copies of the point process $D$.  This completes the proof.\qedhere
\end{proof}

\appendix \section{List of Notations} \label{sec:notation}
\makeatletter{}
\label{pg:notation} To ease the reading, we list the important notions and notations used in this paper, and the corresponding page numbers. 

{\footnotesize
\begin{center} \renewcommand{\arraystretch}{1.2}
\begin{longtable}{p{2.4cm}p{11.1cm}p{1.5cm}}
  \textbf{Notation}&\textbf{Description}&\textbf{Page} \\

$\bbs_0$ &  $\bbs_0 =\bbs \setminus \{s_0\}$ where $s_0 \in \bbs$, a Polish space & \h \pageref{not:s_0} \\

$\scrm(\bar{\real}_0)$ & Space of all Radon point measures on $\bar{\real}_0$ & \h \pageref{not:mr_0} \\

$\regvar(\bbs_0, \alpha, \lambda)$ & Regularly variation on the space $\bbs_0$ & \h\pageref{not:regvar_ams}\\

 $\nu_\alpha(\cdot)$ & Measure on $\bar{\real}_0$ & \h \pageref{eq:nualpha}\\

 $\lamiid(\cdot)$     & Measure on $\rnz$ & \h \pageref{eq:iidprocess_lambda} \\

 $\rnz$ & $\real^\bbn \setminus \mathbf{0}_\infty$ where $\mathbf{0}_\infty \in \real^\bbn$ with all its components as $0$ & \h \pageref{eq:iidprocesslim} \\

$\bar{\real}_0$ & $[-\infty,\infty] \setminus \{0\}$ & \h \pageref{not:r_0}\\

$\scrm_0$  & $\scrm(\bar{\real}_0) \setminus \{ \nullm \}$ & \h  \pageref{not:scrm_0} \\

$\mbfs_b$ & Scalar multiplication operator for elements of $\scrm_0$ with $b>0$ & \h \pageref{eq: scalar}\\

 $\stas$ & Strictly $\alpha$-stable point process & \h \pageref{not:stas} \\

 $\emptyset$ & Null measure & \h \pageref{not:nullm}\\

 $\rm{ScDPPP}$ & Scale-decorated Poisson point process & \h \pageref{not:scdppp} \\

 $\rm{SScDPPP}$ & Randomly scaled scale-decorated Poisson point process & \h \pageref{not:sscdppp}\\

 $\Iv$ & The unique geodesic path from the root to the vertex $\uv$ & \h \pageref{not:gen_uv}\\

 $|\uv|$ & Generation of the vertex $\uv$ & \h \pageref{not:gen_uv}\\

 $\poi$ & Poisson random measure on $\bar{\real}_0$ & \h \pageref{eq:defn_poi}\\

$\ckr$ & Space of all nonnegative continuous functions on $\bar{\mathbb{R}}_0$ with compact support  & \h \pageref{not:ckr}\\

$\Psi_N(\cdot)$ & Laplace functional of the point process $N$ & \h \pageref{eq:lap_not} \\

$\nu(f)$ & $\int f \dtv \nu$ &  \h \pageref{not:int_measure} \\

$[g]_{sc}$ & $\{f \in \ckr : f = \mbfs_y g \mbox{ for some } y>0\}$ & \h \pageref{not:g_sc}\\

$\Psi_N(\cdot \| \cdot)$ & Scaled Laplace functional & \h \pageref{not:scale_laplace}\\

$\fralpha(\cdot)$ & Frech\'{e}t distribution function & \h \pageref{not:fralpha}\\

$D_n$ & $\{\uv \in \mathbb{V}: |\uv|=n\}$ & \h \pageref{not:D_n}\\

 $0_t \in \real^t$ & The zero vector in $\real^t$, $t \in \bbn \cup \{\infty\}$& \h \pageref{not:zero_b}\\

 $\kappa_\lambda$ & A constant based on the measure $\lambda$ & \h \pageref{not:kappa_lambda} \\

  \end{longtable}
\end{center}
}

\section*{Acknowledgment}
The authors are thankful to Antar Bandyopadhyay and Jean Bertoin for asking a question that resulted in Theorem~\ref{thm:mainthm2} of this paper, and to Frank den Hollander for reading the first draft carefully and giving valuable suggestions. Numerous useful discussions with all of them are gratefully acknowledged.

\bibliographystyle{authordate1}			

\vspace{0.5cm}

\end{document}